\documentclass[12pt,a4paper,twoside,leqno]{amsart}

\textheight=8.1 true in
 \setcounter{page}{1}
  \topmargin 30pt

\usepackage[latin1]{inputenc}


\usepackage{amsmath}
\usepackage{amsfonts}
\usepackage{amssymb}
\usepackage{amstext}
\usepackage{amsbsy}
\usepackage{color}

\newtheorem{corollary}{\sc Corollary}[section]
\newtheorem{theorem}{\sc Theorem}[section]
\newtheorem{lemma}{\sc Lemma}[section]
\newtheorem{proposition}{\sc Proposition}[section]
\newtheorem{remark}{\sc Remark}[section]
\newtheorem{definition}{\sc Definition}[section]

\newcommand{\thmref}[1]{Theorem~\ref{#1}}
\newcommand{\secref}[1]{Section~\ref{#1}}
\newcommand{\lemref}[1]{Lemma~\ref{#1}}
\newcommand{\remref}[1]{Remark~\ref{#1}}

\newcommand{\defref}[1]{Definition~\ref{#1}}

\def\Hip{\mathbb H}

\let\hip=\Hip
\let\hyp=\Hip

\newcommand{\R}{\mathbb R}

\let\h=\hip

\let\re=\R

\DeclareMathOperator{\HMc}{\mathcal {M_H}}
\DeclareMathOperator{\bMc}{\mathcal {\overl{M}_H}}
\DeclareMathOperator{\Lc}{\mathcal {L}}
\DeclareMathOperator{\osc}{osc}

\def\Fc{\mathcal F}

\let\Pc =\P

\def\Rc{\mathcal R}

\def\Nc{\mathcal N}

\def\Om{\Omega}

\def \la{\lambda}
\def \a{\alpha}

\def \ga{\gamma}

\def \de{\delta}

\def \a{\alpha}

\def \ga{\gamma}
\def \Ga{\Gamma}

\def \de{\delta}
\def \e{\varepsilon}

\def \ep{\epsilon}
\def \la{\lambda}
\def \La{\Lambda}

\def\vphi{\varphi}

\def \Om{\Omega}

\def \dd   {\displaystyle}

\def\leqs{\leqslant}
\def\geqs{\geqslant}

\def\rmd{\mathop{\rm d\kern -1pt}\nolimits}
\def\rme{\mathop{\rm e\kern -1pt}\nolimits}

\def\e{\varepsilon}

\DeclareMathOperator{\diver}{div}

 \DeclareMathOperator{\cst}{cst}

\DeclareMathOperator{\diam}{diam}

\def \noi {\noindent}

\def\bel{ \medskip
 \centerline{$ \ast \hbox to 1.0cm{}\ast \hbox to 1.0cm{}\ast $}
}

\def\wdh{\widehat}

\def\overl{\overline}

\def\goto{\rightarrow}

\def\longerrightarrow{-\kern-5pt\longrightarrow}

\def\vphi{\varphi}
\def\star{\lower 1pt\hbox{*}}
\def \nulset {
\raise 1pt\hbox{ \hskip -3pt$\not$\kern -0.2pt \raise
.7pt\hbox{${\scriptstyle\bigcirc}$}}}

\def \ep{\epsilon}

\newcommand{\hi}[1]{\mathbb{H}^#1}

\let\leq=\leqslant





\begin{document}

\title[minimal]
{\small  Uniform a priori estimates   for a   class of   horizontal minimal equations.}

\author[  Ricardo Sa Earp]
{\scshape Ricardo Sa Earp}

 \address{Departamento de Matem\'atica,
  Pontif\'\i cia Universidade Cat\'olica do Rio de Janeiro, Rio de Janeiro,  22453-900 RJ,
 Brazil }\email{earp@mat.puc-rio.br}

\thanks{The  author wish to thank
CNPq, FAPERJ of Brazil, for financial support}

\date{\today}

\subjclass[2000]{53C42, 35J25}


\bigskip

\begin{abstract}
\noi  In the product space $\hip^n\times\R,$  we obtain uniform a priori  $C^0$ horizontal length estimates, uniform
 a priori  $C^1$  boundary gradient estimates, as well as uniform modulus of continuity,  for  a class of  horizontal  minimal equations. In two independent variables,
we derive a certain uniform global  a priori $C^1$ estimates and we  infer
an exis\-tence  result.
\end{abstract}

\maketitle


\medskip


\section{Introduction }\label{intro}

The theory of minimal and constant mean curvature surfaces in the product  space $\hip^2 \times \R,$  where $\hip^2$ is the
 hyperbolic  plane  is now  a rich field of intense research.
The notion of vertical graph has a major importance in this theory. The first main results about minimal vertical graphs were derived by B. Nelli and H. Rosenberg \cite{NR}. Since then,
 a significant progress in the theory  has been achieved, see, for instance,  \cite{SE-T}, \cite{M-R-R}, \cite{M-R-R2}. When the ambient space is  $\hip^n \times \R,$ the notion of  vertical mean curvature
 equation in $n$ independent variables has also been established and developed \cite{Sp}, \cite{SE-T4}.

 On the other hand, there exists also  a notion of graph called {\em horizontal graph}
that has been focused  in the theory of  minimal and constant mean
curvature surfaces  \cite{Sa}, \cite{Sa2}.

The notion of horizontal graph arises naturally
in the theory of hypersurfaces in $\hip^n \times \R$. There are many interesting examples of
minimal and constant mean curvature horizontal graphs in $\hip^n
\times \R$ given by  explicit formulas. In fact, there
are complete horizontal  minimal graphs and there are entire horizontal
constant mean curvature graphs  \cite{Be-SE1}, \cite{Be-SE2}, \cite{E-SE}.

We notice that there is a different notion of horizontal graph called {\em horizontal graph with respect to a geodesic of $\hip^2$}, that has been applied to prove  a Schoen type result for minimal  surfaces in $\hip^2 \times \R$  \cite{H-N-SaE-T}.

We
choose the upper half-plane model of hyperbolic plane
$\hip^2=\{(x, y), y>0\},$ endowed with the hyperbolic metric $\dd
d\sigma^2=\frac{dx^2 + dy^2}{y^2}.$ A horizontal graph  in $\hyp^2
\times \R$  is the set $S=\{(x, g(x, t), t),\, (x, t)\in
\Om\}\subset \hyp^2 \times \R\ $, where $\Om\subset\partial_\infty
\Hip^2\times\R$ is a domain and $g(x, t)>0,$ for every $(x, t)\in
\Om.$ This means that in any slice of  $\hyp^2 \times \R$ given by
$t=\cst,$ each horizontal geodesic $x=\cst, y>0$ intersects $S$ in
one point at most. We call the positive function $g(x, t)$  the
{\em horizontal length} of the graph. If $S$ is a horizontal
minimal graph in $\hyp^2 \times \R$ the positive function $g(x,
t)$ satisfies equation \eqref{2min}.

\vskip2mm

 In this paper, we  derive uniform a priori   horizontal length estimates  and uniform a priori boundary gradient estimates  for
positive smooth solutions of a wider class of quasilinear
elliptic
 equations, indexed by the parameter $ \ep\in [0, 1]$. We call such  equations the $\ep$-{\em horizontal minimal equations}
 (see equation \eqref{ehnme}).
 If $\ep=0,$ we find the horizontal minimal equation.

    We point out that if $\ep>0,$ the equation is strictly elliptic, but if $\ep=0$ this fact  is no longer true in  general. Indeed, there are many  examples that can be constructed to show lack of strictly ellipticity for the horizontal minimal equation, even on bounded domains.  We give now a significant example of this phenomenon in two variables.   Just take $g(x,t)=x \sinh t,\, 0<x<1,\, 0<t<1$ \cite[Equation 32]{Sa}. It is easy to verify that $y=g(x,t)$ satisfies the horizontal minimal equation. A simple verification shows that if $x \goto 0, $ then the first eigenvalue of the horizontal minimal operator goes to zero.
      On the other hand, it is amazing  that the vertical minimal equation is strictly elliptic for all values of the independent variables \cite[Equation 4]{Sa}, \cite[Equation 6]{SE-T}.

 The uniform $C^0$ estimates for positive smooth solutions on bounded domains, continuous up to the boundary,  is obtained by comparing, by maximum principle, such solutions with certain  geometric subsolutions and supersolutions. These a priori  $C^0$ estimates depends on the width of the domain and  on the boundary value data.

   In the same sprit as in
\cite{asi},  we obtain uniform a priori boundary gradient estimates on
arbitrary bounded  smooth convex domains  for solutions of equation \eqref{ehnme}, that are smooth and positive up to the boundary.
   Indeed, we provide uniform analytic {\em barriers} on  bounded $C^0$
convex domains to ensure uniform modulus of continuity for
positive continuous solutions of   \eqref{ehnme} that
are continuous and positive up to the boundary.

In the case of two  variables,
we are able to derive  uniform a priori global  $C^1$ estimates on
 bounded smooth convex domains, making an
 additional strong assumption on the horizontal length. However,  this assumption is compatible with the geometry of the ambient space, {\em i.e.}, it is invariant by hyperbolic translations.

Notice that there is a non-existence result that follows from  the asymptotic principle proved by E. Toubiana and the author in \cite[Theorem 2.1]{SE-T}. Namely, there is
no horizontal minimal graph given by a function $g\in C^2(\Om)\cap
C^0(\overl{\Om})$ on a bounded strictly convex domain $\Om$,
taking zero  boundary data on $\partial \Om.$ Furthermore, there is no  horizontal minimal graph in $\hip^2 \times \R$, over a bounded
Jordan domain $\Om$ strictly contained in an horizontal slab of
height $\pi,$ given by a function $g\in C^2(\Om)\cap
C^0(\overl{\Om}),$ taking zero asymptotic boundary data on
$\partial \Om$ \cite[Corollary 2.1]{SE-T}.

We obtain an existence result for the $\ep$-horizontal
minimal equation in two independent variables  on  bounded smooth convex domains, taking certain
smooth positive boundary value data. This existence result is, in certain sense, a counterpart of the non-existence results stated in the foregoing paragraph. Particularly, we derive the following consequence. Given a smooth positive function $f$ on $\partial \Om$; if $c_0$ is a constant large enough,  then there exists a solution of the $\ep$- horizontal minimal equation in $\Om$, taking boundary value data $f+c_0$ on $\partial\Om$.  Actually, it suffices to take  $c_0$  greater than  the sum of the oscillation of the boundary data $f$ with the half of the horizontal width of the domain $\Omega.$

 Existence results for the Dirichlet problem for the $\ep$-horizontal minimal equation in $n\geqs 3$ variables is an open problem.

  It is worthwhile to notice that the horizontal minimal equation \eqref{2min} is not invariant by Euclidean translations along the {\em  $y$-axis}. Of course, the structure of the horizontal minimal equation does not ensure the uniqueness of the solution of the Dirichlet problem on bounded domains. In fact, the model minimal surfaces described in \cite[Proposition 2.1]{SE-T} and used in \cite[Proof of Theorem 1.1]{Sa2} to prove a Bernstein type theorem, shows that there is a  family of horizontal minimal graphs over a rectangle $\Rc$  of $t$-height greater than $\pi$   taking zero boundary data over $\partial \Rc.$
   Furthermore,  over domains of arbitrarily large $x$- width, each element of this family takes arbitrarily small constant boundary data over a  bounded domain in the rectangle, but the family  attains arbitrarily large horizontal length. Thus, it follows that   the dependence of the a priori horizontal length estimates  on the width of the domain  is  quite natural.

   However,   over certain admissible convex domains such that the boundary data has an admissible bounded slope condition,  the solution  is the Morrey´s solution of the Plateau problem \cite{Mo1}. This result follows from the uniqueness theorem established in \cite{Sa2}.  The uniqueness or the non-uniqueness of the Dirichlet problem for  the $\ep$- horizontal minimal equation on  bounded convex domains and positive boundary data is an open problem.

We remark that we need to
 consider the family of $\ep$- horizontal minimal equations in order to prove the existence of our Dirichlet problem for  the horizontal minimal equation.
  The scheme of our construction is the following. First, we prove the existence result in the strictly elliptic situation,{\em i. e.}  when $\ep>0.$
 Then, we are able to deduce it
 for $\ep=0;$ that is, we get a solution for the horizontal minimal equation.
 In fact, in view of our uniform a priori $C^1$ global estimates and elliptic  theory, this solution is obtained as the limit in the $C^2$-topology of a sequence $g_{\ep_n},$
   as $0<\ep_n\goto 0$, satisfying  the $\ep_n$- horizontal minimal equation.

\vskip1.5mm
{\sc Acknowledgments}: The author warmly thanks  to   Eric Toubiana and  Barbara Nelli for their valuable
observations.

\vskip2mm

\section{\sc  The $\ep$-horizontal minimal equation in ~ $\hyp^n \times \R$.}

In this section we  give some computations, some model
subsolutions and supersolutions and we write down some basic
properties of the $\ep$-horizontal minimal equation in the product
space $\hyp^n \times \R$.

We choose the upper half-plane model of the hyperbolic plane
$\hip^n=\{(x_1,x_2,\ldots,x_{n-1}, y), y>0\},$ endowed with the
hyperbolic metric $\dd d\sigma^2=\frac{dx_1^2 +\cdots + dx_{n-1}^2
+ dy^2}{y^2}.$ A horizontal graph  in $\hyp^n \times \R$  is the
set $S=\{(x_1,x_2,\ldots,x_{n-1}, g(x_1,x_2,\ldots,x_{n-1}, t),
t),\, (x_1,x_2,\ldots,x_{n-1}, t)\in \Om\}\subset \hyp^n \times \R
$, where $\Om\subset\partial_\infty \Hip^n\times\R$ \cite{cas} is a domain
and $g(x_1,x_2,\ldots,x_{n-1}, t)>0, (x_1,x_2,\ldots,x_{n-1},
t)\in \Om.$ This means that at any slice of $\hyp^n \times \R$
given by $t=\cst,$ each horizontal geodesic
$(x_1,x_2,\ldots,x_{n-1})=\cst, y>0$ intersects $S$ at most on one
point.

\vskip2mm
 We  give now the computations of some important geometric
quantities of the horizontal graph $S.$

Let us define the quantity\\ $\dd W:=g^2\left( 1 +g_{x_1}^2
+\cdots
 + g_{x_{n-1}}^2\right) +g_t^2.$

\begin{itemize}

 \item The
coefficients of the first fundamental form are \vskip2mm
 $ g_{kk}=\frac{1
+g_{x_k}^2}{g^2},\, 1\leqs k<n,\,
g_{jk}=\frac{g_{x_j}g_{x_k}}{g^2}, 1\leqs j<k<n,\\
g_{nk}=\frac{g_{x_k} g_t}{g^2},1\leqs k<n,\, g_{nn}=\frac{g^2
+g_t^2}{g^2}.$

\item The inverse of the matrix $(g_{ij})$ is the matrix $g^{ij}$
given by

\vskip2mm
 \noi $g^{kk}= \left[ g^2 \left( 1 +g_{x_1}^2 +\cdots +\wdh{g_{x_k}^2}
 +\cdots g_{x_{n-1}}^2\right)+g_t^2\right] \cdot\,\frac{g^2}{W}\, ,\\ 1\leqs k<n.$\\
  $g^{kj}=-g^2 g_{x_j} g_{x_k} \cdot\,\frac{g^2}{W},\, 1\leqs j< k<n.$\\
$ g^{nk}=-g_{t} g_{x_k}\cdot\,\frac{g^2}{W},\, 1\leqs  k<n.$\\
$g^{nn}=\left( 1 +g_{x_1}^2 +\cdots +
g_{x_{n-1}}^2\right)\cdot\,\frac{g^2}{W}.$

 \end{itemize}

 Let us assume that our horizontal graph $S$ is oriented by the
unit normal $N$ given by
\begin{equation}\label{unn}
 N= \left(-g_{x_1}g^2,-g_{x_2}g^2,\ldots,
-g_{x_{n-1}} g^2, g^2, -g_t\right)\cdot \,\frac{1}{W^{1/2}}
\end{equation}

\begin{itemize}

\item  The coefficients of the second fundamental  form are given by

\vskip2mm

$b_{kk}=\left(\frac{1}{g} + g_{x_kx_k}
+\frac{g_{x_k}^2}{g}\right)\cdot\,\frac{1}{W^{1/2}}, 1\leqs k<n.$

 \vskip2mm
 $b_{kj}=\left(g_{x_kx_j} +\frac{g_{x_k} g_{x_j}}{g}\right)\cdot\,\frac{1}{W^{1/2}}, 1\leqs ~j<~k<~n.$

 \vskip2mm

 $b_{nk}= g_{tx_k}\cdot\,\frac{1}{W^{1/2}},\, 1\leqs k<n.$

 \vskip2mm

 $b_{nn}=\left(g_{tt} -\frac{g_t^2}{g}\right)\cdot\,\frac{1}{W^{1/2}}.$

\end{itemize}

\vskip2mm

\begin{proposition}\label{hnmeane}

Let $S$ be a horizontal graph oriented by the unit normal $N$
given by \eqref{unn}. Let $H=H(N)$ be the  mean curvature of $S.$

 Then the horizontal mean curvature equation is given by

\begin{multline}\label{hne}
\HMc(g)=\frac{n H}{g^2}\left(g^2\left( 1 +g_{x_1}^2 +\cdots
 + g_{x_{n-1}}^2\right) +g_t^2\right)^{3/2}\hfill\\
 \text{where}\hfill\\
\HMc(g):= \sum\limits_{k=1}^{n-1}g_{x_k x_k}\left[
g^2\left(1+g_{x_1}^2+\cdots+\widehat{g_{x_k}^2}+\cdots
g_{x_{n-1}}^2\right) +g_t^2\right] +
\left(1+\sum\limits_{k=1}^{n-1}g_{x_k}^2\right)g_{tt}\hfill\\
-2\sum\limits_{k=1}^{n-1} g_{x_k} g_t g_{x_k t}
-2g^2\sum\limits_{1\leqs j<k\leqs n-1} g_{x_j} g_{x_k} g_{x_jx_k}
+(n-1) g \left( 1 + \sum\limits_{k=1}^{n-1} g_{x_k}^2\right)
+(n-2)\frac{g_t^2}{g}\hfill\\
\end{multline}
\end{proposition}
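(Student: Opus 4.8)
\eqref{hne} is obtained by a direct computation of the mean curvature of $S$ as the trace of its Weingarten operator, and I would carry it out in three main steps. \emph{First}, parametrize $S$ by $X(x_1,\dots,x_{n-1},t)=(x_1,\dots,x_{n-1},g,t)$ in $\hyp^n\times\R$, equipped with the product metric $\langle\,,\,\rangle=d\sigma^2+dt^2$, where $d\sigma^2$ is the hyperbolic metric in the upper half-space model. From the tangent frame $X_{x_k}=\partial_{x_k}+g_{x_k}\partial_y$ $(1\leqs k<n)$, $X_t=g_t\partial_y+\partial_t$, pulling back the metric and setting $y=g$ gives the first fundamental form coefficients $g_{ij}$ listed above. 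Noticing that $(g^2 g_{ij})$ is the rank-one perturbation $\mathrm{diag}(1,\dots,1,g^2)+vv^{\mathsf T}$ with $v=(g_{x_1},\dots,g_{x_{n-1}},g_t)$, a Sherman--Morrison computation yields $\det(g_{ij})=W g^{-2n}$ and the inverse matrix $(g^{ij})$ in the stated form, with the recurrent factor $g^2/W$. One also checks that $N$ in \eqref{unn} is orthogonal to each $X_{x_k}$ and to $X_t$ and has unit length, so it is the prescribed unit normal.

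\emph{Next}, compute the second fundamental form $b_{ij}=\langle\bar\nabla_{X_i}X_j,N\rangle$. Since $\hyp^n\times\R$ is a Riemannian product, only the hyperbolic factor carries nonzero Christoffel symbols, and at a point with vertical coordinate $y=g$ these are $\Gamma^y_{x_kx_k}=1/g$ and $\Gamma^{x_k}_{x_ky}=-1/g$ for $k<n$, together with $\Gamma^y_{yy}=-1/g$, everything involving $\partial_t$ vanishing. Then $\bar\nabla_{X_i}X_j=\partial_i\partial_jX+(\text{Christoffel terms})$, and pairing with $N$ reproduces the displayed $b_{kk}$, $b_{kj}$, $b_{nk}$, $b_{nn}$; in particular the zeroth-order pieces $1/g$ in $b_{kk}$ and $-g_t^2/g$ in $b_{nn}$ are exactly the contributions of the ambient hyperbolic curvature.

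\emph{Finally}, since $S$ is an $n$-dimensional hypersurface, $nH=\sum_{i,j=1}^{n}g^{ij}b_{ij}$. Substituting the two families of formulas, every summand carries the common factor $\frac{g^2}{W}\cdot\frac{1}{W^{1/2}}=\frac{g^2}{W^{3/2}}$, so $nH=\frac{g^2}{W^{3/2}}Q(g)$ with $Q(g)$ polynomial in $g$ and its derivatives; it then remains to identify $Q(g)=\HMc(g)$. The diagonal sum $\sum_{k<n}g^{kk}b_{kk}$ yields $\sum_k g_{x_kx_k}[g^2(1+\cdots+\widehat{g_{x_k}^2}+\cdots)+g_t^2]$ plus zeroth-order remainders; $g^{nn}b_{nn}$ yields $(1+\sum_k g_{x_k}^2)g_{tt}$ plus a zeroth-order remainder; $2\sum_{k<n}g^{nk}b_{nk}$ gives $-2\sum_k g_{x_k}g_tg_{x_kt}$; and $2\sum_{j<k<n}g^{jk}b_{jk}$ gives $-2g^2\sum_{j<k}g_{x_j}g_{x_k}g_{x_jx_k}$ plus zeroth-order remainders. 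Since $W=g^2(1+\sum_k g_{x_k}^2)+g_t^2$, gathering all contributions gives $nH=\frac{g^2}{W^{3/2}}\HMc(g)$, that is, \eqref{hne}.

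The main obstacle, and the only point that is not entirely mechanical, is the bookkeeping of the zeroth-order remainders in the last step: the quartic terms $\sum_k g_{x_k}^4$ and $(\sum_k g_{x_k}^2)^2$ produced by $\sum_{k<n}g^{kk}b_{kk}$ must cancel those arising from $2\sum_{j<k<n}g^{jk}b_{jk}$, and the surviving pieces, combined with the remainder of $g^{nn}b_{nn}$, must collapse to exactly $(n-1)g(1+\sum_k g_{x_k}^2)+(n-2)\tfrac{g_t^2}{g}$ — in particular one must check that the coefficient of $g(1+\sum_k g_{x_k}^2)$ comes out $n-1$ and that of $g_t^2/g$ comes out $n-2$. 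With that verification in hand the proposition follows.
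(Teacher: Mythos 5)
Your proposal is correct and follows essentially the same route as the paper: the paper's argument consists precisely of the listed first fundamental form, its inverse (your Sherman--Morrison computation reproduces the stated $g^{ij}$ and $\det(g_{ij})=W g^{-2n}$), the unit normal \eqref{unn}, and the second fundamental form coming from the hyperbolic Christoffel symbols, after which \eqref{hne} is the trace identity $nH=\sum g^{ij}b_{ij}$ with the common factor $g^{2}/W^{3/2}$. The zeroth-order bookkeeping you flag does close as claimed: the quartic terms $\sum_k g_{x_k}^4$ from $\sum_k g^{kk}b_{kk}$ cancel against those from $2\sum_{j<k}g^{jk}b_{jk}$, and the remainder collapses to $(n-1)g\bigl(1+\sum_k g_{x_k}^2\bigr)+(n-2)g_t^2/g$.
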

 Now, if $S$ is  a horizontal minimal graph in $\hyp^n \times \R$, {\em i.e.} $H=0,$
 then the
positive function $y=g(x_1,x_2,\ldots,x_{n-1}, t)$ that we call
the {\em horizontal length} satisfies the equation
\begin{multline}\label{hnme}
 \HMc(g)=0\hfill\\
\end{multline}
If $n=2$ we recover the horizontal minimal equation in
$\hip^2\times \R$ \cite[equation (2)]{Sa}:

 \begin{equation}\label{2min}
  \HMc(g):=
g_{xx} (g^2 +g_t^2) + g_{tt} (1 +g_x^2) -2g_xg_tg_{xt} +g(1
+g_x^2)=0.
\end{equation}
From now on, we focus on a  $1$- parameter family of elliptic
equations including  equation \eqref{hnme}.
 Given a  constant $\ep\in [0, 1]$, we say that a  positive  $C^2$ function $g$ on a  domain $\Om$ is a
solution of the {\em $\ep$-horizontal minimal equation}, if it
satisfies the following quasilinear elliptic P.D.E:
\begin{multline}\label{ehnme}
 \HMc_\ep(g):= \sum\limits_{k=1}^{n-1}\left[
g^2\left(1+g_{x_1}^2+\cdots+\widehat{g_{x_k}^2}+\cdots
g_{x_{n-1}}^2\right) +g_t^2+\frac{\ep}{n-1}\right]g_{x_k
x_k}\hfill\\ +
\left(1+\sum\limits_{k=1}^{n-1}g_{x_k}^2\right)g_{tt}
-2\sum\limits_{k=1}^{n-1} g_{x_k} g_t g_{x_k t}
-2g^2\sum\limits_{1\leqs j<k\leqs n-1} g_{x_j} g_{x_k}
g_{x_jx_k}\hfill\\
 +(n-1) g \left( 1 + \sum\limits_{k=1}^{n-1}
g_{x_k}^2\right)
+(n-2)\frac{g_t^2}{g}\hfill\\
\hphantom{0000000000}=0\hfill\\
\end{multline}
 Setting $a_{kk}(g, D g):= \left[
g^2\left(1+g_{x_1}^2+\cdots+\widehat{g_{x_k}^2}+\cdots
g_{x_{n-1}}^2\right) +g_t^2+ \frac{\ep}{n-1}\right], k=1,\ldots
n-1,\quad a_{nn}(g, D g)=
\left(1+\sum\limits_{k=1}^{n-1}g_{x_k}^2\right)$ and  $a_{kn}(g, D
g)= -g_{x_k} g_t,k=1,\ldots n-1,\quad a_{jk}(g, D
g)=-g^2\sum\limits_{1\leqs j<k\leqs n-1} g_{x_j} g_{x_k},$ then
the  symmetric matrix $a_{ij}(g, D g),\, i,j=1,\ldots,n$ is
positive, and satisfies
\begin{multline}\label{ell2}
\sum\limits_{i,j}a_{ij}(g, D
g)\xi_i\xi_j\leqs\text{trace} (a_{ij}(g, D g))\sum\limits_{k=1}^n \xi_k^2= \hfill\\
\Bigl(1 + \ep + g^2(n-1) +( g^2(n-2) +1)(g_{x_1}^2+\cdots+\cdots
g_{x_{n-1}}^2 ) +(n-1)g_t^2\Bigr)|\xi|^2\hfill\\
\leqs\Bigl(2 +  g^2 (n-1) +\left(\max\{1,g^2\} (n-2)
+1\right)\left(g_{x_1}^2 +\cdots +g_{x_{n-1}}^2
+g_t^2\right)\Bigr)|\xi|^2
\end{multline}
\newpage
And
\begin{multline}\label{ell}
\sum\limits_{i,j}a_{ij}(g, D
g)\xi_i\xi_j=\sum\limits_{k=1}^{n-1}\xi_k^2 (g^2 +\frac{\ep}{n-1})
+\xi_n^2 +g^2\!\!\!\sum\limits_{1\leqs j<k\leqs n-1} \left(\xi_k
g_{x_j}-\xi_j
g_{x_k}\right)^2\hfill\\
\hfill\hphantom{0000000000000000000}+\sum\limits_{k=1}^{n-1}\left(\xi_k g_t-\xi_n g_{x_k}\right)^2\hfill\\
\hphantom{0000000000000000000}\geqs
\min\{1,g^2\}|\xi|^2,\hfill\\
\quad\text{where}\, \xi\in\R^n\setminus\{0\},|\xi|^2=\sum\limits_{k=1}^n \xi_k^2.\hfill\\
\hfill\\
\end{multline}
By invoking   inequality \eqref{ell}, we deduce that the
$\ep$-horizontal minimal equation \eqref{ehnme} is a second order
quasilinear elliptic equation. Of course, if $\ep>0,$ equation
\eqref{ehnme} is strictly elliptic. If $\ep=0$ and if $g$ is a
$C^2$ positive function on a bounded domain $\Om$, continuous up
to the boundary, then \eqref{ehnme} is a strictly elliptic
equation (cf. \secref{she}, inequality \eqref{hesn}).

\begin{remark}
{\em
It is worth  noticing that L. Hauswirth, H. Rosenberg and
J. Spruck proved a half-space theorem   for properly embedded
constant constant mean curvature $1/2$ surfaces in $\hip^2\times
\R$, where the construction of $H=1/2$ horizontal graphs play a
significant role in the proof \cite[Theorem 1.1]{HRS}.}
\end{remark}

We recall now the definition of subsolution and supersolution
\cite{SE-T}, \cite{SE-T4}. We say that a $C^2$ function $u:\Om\goto \R$  is a
subsolution of  equation \eqref{ehnme} in a domain $\Om$, if
$\HMc_\ep(u)\geqs 0,$ in $\Om.$ We say that $w:\Om\goto \R$ is a
supersolution if $\HMc_\ep(w)\leqs 0.$  It is well-known that
elliptic theory ensures the following \cite{P-W},\cite{GT}, \cite{P-S}:

\begin{proposition}[{\bf {\em Classical maximum principle}} ] Let $u:\Om\goto \R$ and $w: \Om\goto \R$
be a subsolution and a supersolution,  for equation \eqref{ehnme},
respectively. Then if $u\leqs w$ in $\Om$ and if there is a point
$p\in \Om$ such that $u(p)=w(p),$ it follows that $u=w$ in $\Om.$
\end{proposition}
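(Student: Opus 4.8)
The statement is the strong (Hopf) maximum principle for the quasilinear operator $\HMc_\ep$, and I would prove it by the classical device of linearizing along the segment joining $u$ and $w$ and then quoting the strong maximum principle for linear elliptic operators. Put $v:=w-u$, so $v\geqs 0$ on $\Om$ and $v(p)=0$; thus $v$ attains an interior minimum equal to $0$ at $p$. The plan is to exhibit a linear, locally uniformly elliptic operator $L$ with continuous coefficients such that $Lv\leqs 0$ on $\Om$, and then conclude $v\equiv 0$.

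To build $L$, I would write $\HMc_\ep(g)=F(g,Dg,D^2g)$ with
\[
F(z,p,r)=\sum_{i,j}a_{ij}(z,p)\,r_{ij}+B(z,p),\qquad B(z,p)=(n-1)z\Bigl(1+\sum_{k=1}^{n-1}p_k^2\Bigr)+(n-2)\frac{p_n^2}{z},
\]
where the $a_{ij}$ are exactly the coefficients introduced after \eqref{ehnme} and $p_n$ plays the role of $g_t$; note that $F$ and its first derivatives in $(z,p,r)$ are smooth on $\{z>0\}$, which is where $u$ and $w$ take their values (the natural domain of $\HMc_\ep$). For $x\in\Om$ and $\tau\in[0,1]$ set $g_\tau:=u+\tau v>0$, so that $\tau\mapsto(g_\tau,Dg_\tau,D^2g_\tau)$ interpolates between the $2$-jets of $u$ and $w$. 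The fundamental theorem of calculus gives
\[
\HMc_\ep(w)-\HMc_\ep(u)=\int_0^1\frac{d}{d\tau}F\bigl(g_\tau,Dg_\tau,D^2g_\tau\bigr)\,d\tau=\sum_{i,j}A_{ij}v_{ij}+\sum_i b_i v_i+c\,v=:Lv,
\]
with $A_{ij}(x):=\int_0^1 a_{ij}(g_\tau,Dg_\tau)\,d\tau$, $b_i(x):=\int_0^1\partial_{p_i}F\,d\tau$ and $c(x):=\int_0^1\partial_z F\,d\tau$, all continuous on $\Om$. By inequality \eqref{ell} applied with $g=g_\tau$, each matrix $\bigl(a_{ij}(g_\tau,Dg_\tau)\bigr)$ satisfies $\sum_{i,j}a_{ij}\xi_i\xi_j\geqs\min\{1,g_\tau^2\}\,|\xi|^2>0$ for $\xi\neq 0$ (and $\geqs\min\{1,\ep/(n-1)\}|\xi|^2$ if $\ep>0$), hence the averaged matrix $(A_{ij})$ is positive definite and $L$ is locally uniformly elliptic with locally bounded lower order coefficients. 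Since $u$ is a subsolution and $w$ a supersolution, $Lv=\HMc_\ep(w)-\HMc_\ep(u)\leqs 0$.

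It remains to apply the linear strong maximum principle to $Lv\leqs 0$, $v\geqs 0$, $v(p)=0$. Because the interior minimum of $v$ equals $0$, the sign of the zeroth order coefficient $c$ is immaterial: replacing $c$ by $c^-:=\min\{c,0\}\leqs 0$ changes $Lv$ only by the nonpositive quantity $-c^+v$, so $v$ also satisfies $\widetilde Lv\leqs 0$ for the operator $\widetilde L$ with nonpositive zeroth order term. The Hopf strong maximum principle \cite{P-W},\cite{GT},\cite{P-S} then forces $v$ to vanish identically on the connected component of $\Om$ containing $p$; as $\Om$ is a domain, $v\equiv 0$, i.e.\ $u=w$ on $\Om$.

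The one point that really needs care is the uniform positive definiteness of the linearized leading coefficient $(A_{ij})$ along the whole homotopy $g_\tau$ — this is exactly what the algebraic identity \eqref{ell} provides (using that $u$ and $w$ are positive, or that $\ep>0$). The remaining manipulations — the chain rule, continuity of the averaged coefficients, and the reduction to a nonpositive zeroth order term — are routine.
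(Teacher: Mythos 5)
Your argument is correct, and it is the standard proof lying behind the citations: the paper itself does not prove this proposition, it simply invokes the classical references \cite{P-W}, \cite{GT}, \cite{P-S}, so your write-up supplies exactly the linearization-plus-Hopf argument those sources encode. Writing $\HMc_\ep(w)-\HMc_\ep(u)=Lv$ with coefficients averaged along $g_\tau=u+\tau(w-u)$, getting local uniform ellipticity of $(A_{ij})$ from \eqref{ell}, and disposing of the sign of the zeroth-order coefficient because the interior minimum of $v$ is zero, is precisely the textbook comparison/strong maximum principle for quasilinear operators (cf.\ the quasilinear comparison theorems in \cite{GT}). The one hypothesis you quietly add --- that $u$ and $w$ are positive, so that $g_\tau\geqs u>0$ is bounded away from zero on compact subsets, making $\min\{1,g_\tau^2\}$ a genuine local ellipticity constant and keeping the term $(n-2)g_t^2/g$ smooth --- is not stated in the proposition but is implicit throughout the paper, where sub- and supersolutions are horizontal lengths and hence positive; it is worth flagging, as you do, since for $\ep=0$ the ellipticity in \eqref{ell} degenerates exactly when $g_\tau\goto 0$. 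With that understanding the proof is complete and matches the intended one.
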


\begin{remark}\label{rm1}
${}$

{\em
\begin{enumerate}

\item Is is easy to see that   the Euclidean $n$-planes in $\hip^n\times \R$ given by \\ $y = \sum\limits_{k=1}^{n-1}a_kx_k +bt +c;\ a, b, c\in
\R, \, y>0$ are positive subsolutions  of  equation \eqref{ehnme}.
In particular the {\em horocylinders} given by $y = c,\, c>0,$ are
subsolutions of \eqref{ehnme}.

\item \label{geo}  Of course, the  {\em vertical geodesic $n$-planes} $\Pc=\Pc(R, x_0) =\\ \{(x_1,\ldots, x_{n-1}, y, t)\in \hip^
n\times\R;\\
y=\sqrt{R^2 -(x_1-a_1)^2-\cdots -(x_{n-1}-a_{n-1})^2},\,t\in
(-\infty, \infty),\\ R^2 -(x_1-a_1)^2-\cdots -(x_{n-1}-a_{n-1})^2>
0\},$ where $R>0$ and $x_1,\ldots, x_{n-1}\in \R$, are solutions
of equation \eqref{hnme}. Moreover, it is routine to check that
they are supersolutions for equation \eqref{ehnme}, as well.

\item  Minimality is invariant by a positive isometry of  $\hip^n\times \R$ given by a hyperbolic
translation of $\hip^n\times\{0\}$ along a geodesic $L$
\cite{cas}. Particularly,  the homothety in $\hip^n$ with center
$(a_1,\ldots, a_{n-1})\in \R^{n-1}$, ratio $\la >0,$  keeping
$\hip^n$ invariant gives rise to an isometry of the product space
$\hip^n\times \R$ given by $(x_1,\ldots, x_{n-1}, y, t)\mapsto
\left(\la \left[ (x_1,\ldots, x_{n-1}, y)- (a_1,\ldots, a_{n-1},
0)\right ] +(a_1,\ldots, a_{n-1}, 0),\, t\right).$

In view of this observation, we deduce  that if\\ $y
=g(x_1,\ldots, x_{n-1}, t)$ is a  positive solution of
\eqref{hnme} on a domain $\Om$ then $ y=\la
g(\frac{x_1}{\la},\ldots,\frac{x_{n-1}}{\la}, t),\, \la
>0$ is also a solution of \eqref{hnme} on $T_\la(\Om), $ where
$T_\la$ is the linear map given by the matrix $\begin{pmatrix}
\la\,I_{n-1}&0\\0&1\end{pmatrix}$, and where $I_{n-1}$ is the
$n-1\times n-1$ identity matrix.

\item  Important model minimal hypersurfaces are the
hypersurfaces invariant by hyperbolic translations \cite[Theorem
3.8]{Be-SE1}. They are complete horizontal minimal graphs.
\end{enumerate}
}
\end{remark}

\section{\sc Uniform horizontal length estimates} \label{she}

In this section we infer uniform horizontal length estimates for
solutions of equation \eqref{ehnme} over bounded domains that are continuous and positive up to the boundary.  In order to do that, we find minimal and constant mean curvature surfaces that will be used   as  barriers.

We need to define some geometric quantities that will be useful in the sequel.

\begin{definition}\label{qua}

Let $\Om\subset\partial_\infty \Hip^n\times\R$  be a bounded $C^0$
domain with boundary $\Ga$. Let  $f\in C^ {0}(\Ga)$ be a positive function. If $n=2$, let $h(\Ga)= \max\limits_\Ga x|_\Ga-\min\limits_\Ga x|_\Ga$ be the horizontal width of the domain $\Om,$ where $ x|_\Ga$ is the  restriction of the  $x$ coordinate to $\Ga.$
 We define
$$R(\Om,f):=\sqrt{\max\limits_\Ga f^2 +\left(\frac{
h(\Ga)}{2}\right)^2}$$

If $n\geqs 3,$  let $\Om^\pi$ be the orthogonal
projection of $\Om$ on $\partial_\infty \Hip^n\times\{0\}$. We
denote by $\diam (\Om^\pi)$  the diameter of $\Om^\pi.$

We define  $r(\Om)$ by the radius of
the smallest $n-1$ closed round  disk containing
$\Om^\pi$, with center in $\Om^\pi$. Clearly,\\
$\frac{\diam(\Om^\pi)}{2}\leqs r(\Om)< \diam
(\Om^\pi).$

We set
$$R(\Om,f):=\sqrt{\max\limits_\Ga f^2
+r^2(\Om)}$$

\end{definition}

 We
have therefore:

\begin{lemma}[{\bf{\em Uniform horizontal length estimates}}]\label{hhen}
${}$

Let $\Om\subset\partial_\infty \Hip^n\times\R$ be a bounded $C^0$
domain with boundary $\Ga$. Let $f\in C^ {0}(\Ga)$ be a positive
function.

Assume that $f$ admits a positive  extension $g\in
C^{2}(\Omega)\cap C^0(\overl{\Om})$ satisfying the
$\ep$-horizontal minimal surface equation in $\Omega$.

Then, the following estimate holds:

\begin{equation}\label{hesn}
 \hfill\min\limits_\Ga f<g< R(\Om,f)\quad \text{in} \quad \Om.\hfill\\
\end{equation}

\end{lemma}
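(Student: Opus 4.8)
The plan is to trap $g$ between two explicit geometric barriers and apply the maximum principle, as announced in the introduction. The lower bound is the elementary half and rests only on the sign of the zeroth order term of \eqref{ehnme}. Since $\overline\Om$ is compact and $g\in C^0(\overline\Om)$ is positive, $g$ attains its minimum on $\overline\Om$; were it attained at an interior point $p$, then $Dg(p)=0$ and $D^2g(p)\geqs 0$, and, dropping from \eqref{ehnme} every term carrying a factor $g_{x_k}$ or $g_t$, we would be left with
\[
\sum_{k=1}^{n-1}\Bigl(g(p)^2+\tfrac{\ep}{n-1}\Bigr)g_{x_kx_k}(p)+g_{tt}(p)+(n-1)g(p)=0,
\]
which is impossible, since $g_{x_kx_k}(p)\geqs0$ for every $k$, $g_{tt}(p)\geqs0$, and $(n-1)g(p)>0$. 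Hence the minimum of $g$ is attained only on $\Ga$, where $g=f$, so $g>\min_\Ga f$ throughout $\Om$. (Equivalently, one compares $g$ from below with the horocylinder $y\equiv\min_\Ga f$, a strict subsolution of \eqref{ehnme} by Remark~\ref{rm1}(1).)

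For the upper bound I would use the vertical geodesic $n$-plane of Remark~\ref{rm1}(2). Let $a=(a_1,\dots,a_{n-1})\in\Om^\pi$ and $r(\Om)$ be the centre and radius supplied by \defref{qua} (when $n=2$, $a$ is the midpoint of the $x$-range of $\Om$ and $r(\Om)=h(\Ga)/2$), put $R:=R(\Om,f)=\sqrt{\max_\Ga f^2+r(\Om)^2}$, so that $r(\Om)<R$, and let
\[
w(x_1,\dots,x_{n-1},t):=\Bigl(R^2-\textstyle\sum_{k=1}^{n-1}(x_k-a_k)^2\Bigr)^{1/2}.
\]
Because $\overline{\Om^\pi}\subset\overline{B}(a,r(\Om))\subset B(a,R)$, this $w$ (independent of $t$) is positive and smooth on $\overline\Om$, and by Remark~\ref{rm1}(2) it is a supersolution of \eqref{ehnme}, strict when $\ep>0$. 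On $\Ga$ one has $\sum_k(x_k-a_k)^2\leqs r(\Om)^2$, so $w\geqs\sqrt{R^2-r(\Om)^2}=\max_\Ga f\geqs f=g$. Granting the comparison $g\leqs w$ in $\Om$ (treated below), we get $g\leqs w\leqs R=R(\Om,f)$; and this is strict, for if $g(q)=R(\Om,f)$ at some $q\in\Om$ then $g(q)=w(q)$, and the classical maximum principle recalled above forces $g\equiv w$ on $\Om$ — which contradicts $\HMc_\ep(w)<0$ when $\ep>0$, and, when $\ep=0$, gives $f=w|_\Ga$; but then, the slice $\{t:(a,t)\in\Om\}$ being nonempty, open and bounded, the locus $\{x=a\}$ meets $\Ga$, where $w=R$, whence $\max_\Ga f=R$ and $r(\Om)=0$ — absurd.

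The remaining, and main, point is the comparison $g\leqs w$ in $\Om$. This is not a purely formal consequence of the maximum principle, since the zeroth order term $(n-1)g\,(1+\sum g_{x_k}^2)$ of \eqref{ehnme} is increasing in $g$, so the textbook comparison principle does not apply verbatim. I would obtain it in either of two ways. \emph{(i) A sweep-out.} For $\rho>r(\Om)$ let $w_\rho$ denote $w$ with $R$ replaced by $\rho$; then $w_\rho>g$ on $\overline\Om$ for $\rho$ large, while $w_\rho<g$ at the point of $\overline{\Om^\pi}$ farthest from $a$ when $\rho$ is close to $r(\Om)$, so there is a smallest $\rho_0>r(\Om)$ with $w_{\rho_0}\geqs g$ on $\overline\Om$ and $w_{\rho_0}(q^\ast)=g(q^\ast)$ for some $q^\ast$; the contact point $q^\ast$ lies on $\Ga$ — whence $\rho_0^2\leqs\max_\Ga f^2+r(\Om)^2=R^2$, so $g\leqs w_{\rho_0}\leqs w$ — unless it lies in $\Om$, in which case the classical maximum principle gives $g\equiv w_{\rho_0}$, impossible for $\ep>0$ and, for $\ep=0$, forcing $\rho_0=\max_\Ga f<R$ and hence $g<R$ directly. \emph{(ii) Direct inspection.} At a hypothetical interior maximum $p$ of $g-w$ one has $Dg(p)=Dw(p)$ and, $w$ being $t$-independent, $g_t(p)=0$; substituting into \eqref{ehnme} and subtracting $\HMc_\ep(w)(p)\leqs0$, the difference $\HMc_\ep(g)(p)-\HMc_\ep(w)(p)$ splits into a term $\leqs0$ (coming from ellipticity \eqref{ell} and the maximality of $p$) plus a part which, because $w$ solves \eqref{hnme}, collapses to $-\bigl(g(p)-w(p)\bigr)(n-1)\bigl(1+|D_xw(p)|^2\bigr)\,g(p)/w(p)<0$; this contradicts $\HMc_\ep(g)(p)-\HMc_\ep(w)(p)=-\HMc_\ep(w)(p)\geqs0$. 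That cancellation (equivalently, the choice of the sweeping family) is the crux; everything else is routine elliptic theory.
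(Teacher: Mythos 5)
Your proposal is correct and follows the same barrier strategy as the paper: horocylinders (constant functions) from below and the family of vertical geodesic $n$-planes of \remref{rm1} (2) from above, combined with the maximum principle; the differences are mostly organizational. For the lower bound you argue directly at an interior minimum of $g$ instead of sweeping horocylinders upward, but the content is identical (constants are strict subsolutions because of the term $(n-1)g>0$). For the upper bound, your sweep-out (i) over the radius $\rho$ is exactly the paper's family $\Pc_s$; where the paper rules out interior contact (both for $s>0$ and at $s=0$) by projecting $S$ and $\partial S$ onto $\Hip^2\times\{0\}$ and using that the projection of $\partial S$ meets the geodesic in at most two points, you instead invoke the tangency principle together with strict supersolvability of the hemispheres when $\ep>0$ and, when $\ep=0$, the observation that the vertical line over the center meets $\Ga$, which bounds the critical radius by $\max_\Ga f<R(\Om,f)$ and still yields $g<R(\Om,f)$; this is a clean substitute, and it also handles the strictness of the upper bound. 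Your alternative (ii) is the one genuinely different ingredient: it replaces the sweep by a pointwise computation at an interior maximum of $g-w$, where $Dg=Dw$, $g_t=0$, and the identity $\HMc(w)=0$ converts the difference of the $g^2$-dependent coefficients into the favorable term $-(n-1)\bigl(g-w\bigr)\bigl(1+|D_x w|^2\bigr)g/w<0$; this correctly isolates why comparison holds despite the \emph{increasing} zeroth-order dependence on $g$ (the reason the paper must sweep rather than compare directly), and it is written uniformly in $n$, whereas the paper carries out $n=2$ and asserts the general case is analogous.
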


\begin{proof} If $n=2$ then the  proof is based on the observation
that the horocylinders and the vertical geodesic planes are
subsolutions and supersolutions for equation \eqref{ehnme},
respectively. Let us proceed now the proof. Let $S$ be the graph of $g$, let  $y|_S$ be the horizontal
coordinate $y$ restricted to $S$, given by the function $g$ and let $y|_{\partial S}$ be the horizontal
coordinate $y$ restricted to $\partial S$, given by the function $f$

First, we will deduce the lower bound.  Let $c_0$ be a positive
constant such that $c_0<\min\limits_S y|_S.$  Consider the family
of horocylinders  $P_s, s\geqs 0$ given by $y=c_0 +s, s\geqs 0.$
Of course, for $s=0$, $S$ is contained in the mean convex side of
$P_0$, that is, the horizontal length of $S$ satisfies $y|_S>
c_0.$ Now letting $s\uparrow \infty$ we find a first point of
contact of $S$ with the family $P_s.$ By the maximum principle
this first point of contact should be at a point of the boundary
$\partial S,$ hence  the whole surface $S$ should be strictly
contained in the mean convex side of the horocylinder
$y=\min\limits_\Gamma f,$ {\em i.e} $y|_{S\setminus \partial S}
> \min\limits_\Gamma f,$ or equivalently $g>\min\limits_\Ga f$ in $\Om$, as desired.

To obtain the upper bound, we argue as follows.\\
Consider the geodesic plane $\Pc(\Gamma)$ given by \remref{rm1}
(\ref{geo}), where $x_0=x_0(\Gamma):=\left(\max\limits_\Gamma
x|_{\Ga}
+\min\limits_\Gamma x|_{\Ga}\right)/2$ and $R=R(\Om,f)=\\
\sqrt{\max\limits_\Ga (y|_{\Ga})^2 +\left( \frac{\max\limits_\Ga
x|_{\Ga}}{2}- \frac{\min\limits_\Ga x|_{\Ga}}{2}\right)^2}.$ By
construction, $\partial S \subset\Pc^+(\Gamma):=\{(x, y, t)\in
\hip^2\times \R; \,y\leqs\sqrt{R^2(\Om,f) -(x-x_0(\Gamma))^2}\}.$
Consider now the family of geodesic planes $\Pc_s=\Pc(R(\Gamma)+s,
x_0(\Gamma)),\, s\geqs 0, \Pc_0=\Pc(\Gamma).$ Of course, for $s$
big enough, we have that $S$ is contained in the same connected
component of $\hip^2\times \R\setminus \Pc_s(\Gamma)$ as $\partial S,$
that is $S\subset \{(x, y, t)\in \hip^2\times \R;\,y
<\sqrt{(R(\Gamma)+s)^2 -(x-x_0(\Gamma))^2}\}.$ Now letting
$s\downarrow 0$ we cannot find a point of contact for $s>0,$ by
maximum principle. In contrary, we should find a first interior
point of contact of $S$ with some geodesic plane of the family for
some $s_0>0$. Since this first point of  contact should be an
interior point, we deduce that $S$ would be part of the geodesic
plane $\Pc_{s_0}$. Hence, the horizontal projection of whole $S$
in $\hip^2\times\{0\}$, would be contained in the geodesic (half
Euclidean circle) $y^2 +(x-x_0(\Gamma))^2 = (R(\Om,f) +s)^2,y>0,$
which gives a contradiction. We conclude therefore that $S\subset
\Pc^+(\Gamma)$. Thus, we have $y|_{S}\leqs R(\Om,f).$

For the same reason we cannot find an interior point of contact
when, during the movement toward the original position at $s=0$,
the family reaches $\Pc(\Gamma)$. Indeed if this could happen then
$S$ would be a part of the geodesic plane $\Pc(\Gamma)$, so that
the horizontal projection of $\Ga$ would be an arc of the geodesic
$y^2 +(x-x_0(\Gamma))^2 = (R(\Om,f))^2,y>0$ with positive length.
Since the intersection of the horizontal projection of $\Ga$ with
such geodesic consists at most of two points, we get a
contradiction. Henceforth, we obtain the strictly inequality
$y|_{S\setminus\Gamma} < R(\Om,f)$. This completes the proof of the
Lemma, if $n=2.$

Now,  if $n>2$ the structure of the proof is the same. Let us consider again $S=\text{
graph (g)}.$ Notice that our assumption implies that $\partial S$
lies in the side of a $n$-dimensional vertical geodesic plane
$S_{R(\Om,f)} \times \R\subset \hip^n\times R$, whose asymptotic
boundary contains $\Om^\pi$. Where $S_{R(\Om,f)}$ is a $n-1$
geodesic plane in $\Hip^n$ (Euclidean $n-1$ halfsphere) of
Euclidean radius $R(\Om,f).$

 We now accomplish the proof, as follows:
 We can use the $n$ dimensional horocylinders to obtain the
horizontal lower length bounds  for \eqref{ehnme},  in the same
way as in the case $n=2$.
  Moreover,  the $n$-dimensional
vertical totally geodesic planes (\remref{rm1} (\ref{geo})) can be
employed,   to obtain
the desired horizontal upper bounds for \eqref{ehnme}, working similarly as in the case  $n=2.$
\end{proof}

\begin{remark}
{\em The proof shows that the estimate hold if we allow  the solutions to be nonnegative on the boundary}.

\end{remark}
\section{ Uniform boundary gradient estimates and\\ modulus of
continuity}\label{sbe}

In this section  we build uniform {\em barriers} at any point of
the  boundary of a bounded convex smooth domain, for a   positive
smooth solution $g$ of the $\ep$-horizontal minimal equation
 $C^ 1$ up to the boundary.  We obtain in fact these a
priori gradient estimates in the sprit of \cite{asi}, on account
of  the techniques in \cite{GT}. We also construct analytic
barriers to get an uniform {\em modulus of continuity}   along the
boundary   of a bounded convex   domain, for  a positive solution
$g$ of the $\ep$-horizontal minimal equation $C^ 0$
up to the boundary.
\begin{definition}\label{ncon}
{\em We say that a $C^0$ domain  $\Omega\subset\partial_\infty
\Hip^n\times\R$ is  convex if, for any $p\in \partial \Om,$ $\Om$
lies in one side of some $n-1$-plane $\Pi$ of $\partial_\infty
\Hip^n\times\R$ passing through $p$, {\em i.e} $p\in \Pi\cap
\partial \Om$ and $\Pi \cap \Om =\emptyset$. }
\end{definition}
We   need  now   the definition of the quantity $R(\Om,f)$
raised in \defref{qua}.

\begin{theorem}[{\bf{\em Uniform boundary gradient estimates I}}]\label{ge1}
${}$
 Let $\Om\subset\partial_\infty \Hip^n\times\R$ be a $C^2$
bounded domain. Let $\vphi\in C^2(\overl{\Om})$ be a positive
function. Let $g\in C^2(\Om)\cap C^1(\overl{\Om})$ be a positive
solution of the $\ep$-horizontal minimal equation \eqref{ehnme}
in $\Om,$ such that $g=\vphi$ on $\Ga=\partial \Om.$ Assume that
$\Om$ is convex.
Then, the following estimate holds.
\begin{multline}\label{bge}
\hfill \max\limits_\Ga |D g|\leqs C\hfill\\
\hfill \text{where}\quad C=C(\min\limits_{\Ga} g,
\max\limits_{\Ga} g,r(\Om),\max \limits_{ \Om}
\vphi, \max \limits_{ \Om} |D\vphi|, \max \limits_{\Om}
|D^2\vphi|).\hfill
\end{multline}
\end{theorem}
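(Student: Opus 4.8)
The plan is to construct, at each boundary point $p \in \Ga$, an explicit \emph{barrier} function that bounds $g$ from above and below near $p$, with the barrier built from the geometric supersolutions and subsolutions already identified in \remref{rm1} --- namely horocylinders, Euclidean $n$-planes, and vertical geodesic $n$-planes --- composed with the convexity hypothesis. The standard mechanism (as in \cite{GT}, and following \cite{asi}) is: since $g$ and the barrier $w^{\pm}$ agree with, or dominate/are dominated by, $\vphi$ on $\Ga$, and since $w^+$ is a supersolution and $w^-$ a subsolution of \eqref{ehnme}, the classical maximum principle forces $w^- \leqs g \leqs w^+$ in a neighborhood of $p$; differentiating at $p$ (where all three functions meet) yields $|Dg(p)| \leqs \max(|Dw^+(p)|, |Dw^-(p)|)$, and one then checks that the latter is controlled by the listed quantities $\min_\Ga g$, $\max_\Ga g$, $r(\Om)$, $\max_\Om \vphi$, $\max_\Om|D\vphi|$, $\max_\Om|D^2\vphi|$ uniformly in $p$ and in $\ep \in [0,1]$.

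First I would fix $p \in \Ga$ and, using convexity, choose an $(n-1)$-plane $\Pi$ of $\partial_\infty \Hip^n \times \R$ through $p$ with $\Om$ on one side and $\Pi \cap \Om = \emptyset$. For the \emph{upper} barrier I would use a vertical geodesic $n$-plane $\Pc(R, x_0)$ from \remref{rm1}(\ref{geo}), chosen so that its asymptotic boundary half-space contains $\overl{\Om}$, tangent to $\Pi$ at $p$, with radius $R$ of order $r(\Om)$ but then rescaled/lifted vertically by a large additive constant and a suitable multiplicative factor (using the homothety of \remref{rm1}(3), which preserves solutions of \eqref{hnme} and, one checks, the supersolution property for \eqref{ehnme}) so that on $\Ga$ it lies above $\vphi$ --- this costs only $\max_\Om \vphi$ and $r(\Om)$ --- while still being tangent from above at $p$. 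Here I would actually want to be more careful: the cleanest construction is the classical one where $w^+(q) = \vphi(p) + \psi(d(q))$, with $d$ the distance to the tangent plane $\Pi$ and $\psi$ a concave increasing function $\psi(s) = \frac{1}{\nu}\log(1 + \mu \nu s)$ with constants $\mu, \nu$ to be fixed; one substitutes into $\HMc_\ep$ and uses the ellipticity bounds \eqref{ell2}, \eqref{ell}, together with the $C^2$ bounds on $\vphi$ (to absorb the tangential derivatives of $\vphi$ along $\Ga$) and the $C^0$ bounds $\min_\Ga g \leqs g \leqs R(\Om,f)$ from \lemref{hhen}, to verify $\HMc_\ep(w^+) \leqs 0$ in a thin slab $\{0 < d < a\}$ for $\mu$ large and $a$ small depending only on the advertised quantities. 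The \emph{lower} barrier is symmetric, built from a horocylinder or Euclidean $n$-plane (which are subsolutions) shifted downward, or from $\vphi(p) - \psi(d(q))$ with the analogous verification; the lower $C^0$ bound $g > \min_\Ga f$ keeps $g$ away from zero so the terms $(n-2)g_t^2/g$ and $(n-1)g(\cdots)$ stay controlled.

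The key steps in order: (1) reduce to a local statement at $p$ via convexity and the supporting plane $\Pi$; (2) set up the candidate barriers $w^{\pm}(q) = \vphi(p) \pm \psi(d(q))$ in a slab of width $a$ against $\Pi$, with $\psi(s) = \nu^{-1}\log(1 + \mu\nu s)$; (3) compute $\HMc_\ep(w^{\pm})$ and use \eqref{ell2}--\eqref{ell} plus the $C^0$ estimate \eqref{hesn} and the $C^2$ data of $\vphi$ to fix $\mu$ large, $a$ small (uniformly in $\ep$ and $p$) so that $w^+$ is a supersolution and $w^-$ a subsolution on the slab; (4) check the boundary inequalities $w^- \leqs g \leqs w^+$ on the slab boundary --- on $\Ga \cap \{d < a\}$ using $g = \vphi$ and the $C^1$ control of $\vphi$, on $\{d = a\}$ using the $C^0$ bounds; (5) apply the classical maximum principle (Proposition on classical maximum principle, or its comparison form) to sandwich $g$; (6) differentiate at $p$ to conclude $|Dg(p)| \leqs |\psi'(0)| + |D\vphi(p)|_{\mathrm{tang}} = \mu + \text{(controlled)}$, and note all constants depend only on the listed data. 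The main obstacle I anticipate is step (3): because \eqref{ehnme} is only uniformly elliptic \emph{after} invoking the $C^0$ bounds (the lower ellipticity constant $\min\{1,g^2\}$ degenerates as $g \to 0$), one must carefully thread the lower horizontal-length bound $g > \min_\Ga f$ through the barrier computation, and one must confirm the constants in the resulting inequality $\HMc_\ep(w^+) \leqs 0$ do not blow up as $\ep \downarrow 0$ --- which is exactly why the $\ep/(n-1)$ term was inserted additively rather than multiplicatively, so the $\ep$-dependence in \eqref{ell2} is benign. A secondary subtlety is handling the cross terms $-2 g_{x_k} g_t g_{x_k t}$ and $-2g^2 \sum g_{x_j}g_{x_k}g_{x_jx_k}$ in $\HMc_\ep$ when $w^{\pm}$ depends on $q$ only through the affine function $d(q)$; these collapse to the quadratic-form structure visible in \eqref{ell}, so the sign works out, but it requires writing $D^2 w^{\pm}$ as a rank-one matrix $\psi''(d)\, \nu \otimes \nu$ plus the lower-order terms and matching against the $a_{ij}$.
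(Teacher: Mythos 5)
Your overall strategy is the paper's: pick a supporting $(n-1)$-plane $\Pi$ at $p$ (convexity), build a barrier of the form ``boundary data plus $\psi(d)$'' with $\psi$ a concave logarithm of the Euclidean distance $d$ to $\Pi$, restore uniform ellipticity through the a priori horizontal length estimate of \lemref{hhen}, and conclude by the maximum principle applied to a frozen-coefficient (linear) comparison operator. However, your specific ansatz $w^{\pm}(q)=\vphi(p)\pm\psi(d(q))$ has a genuine gap at step (4): the required boundary inequality on $\Ga\cap\{d<a\}$ is $\vphi(q)\leqs\vphi(p)+\psi(d(q))$ (and its analogue from below), and this fails for non-constant $\vphi$. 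Indeed $\Pi$ is a supporting plane, so along $\Ga$ near $p$ one has $d(q)=O(|q-p|^{2})$ (and $d\equiv 0$ on any flat portion of the convex boundary, which is allowed here), while $\vphi(q)-\vphi(p)$ is in general of order $|q-p|$; no choice of $\mu=\psi'(0)$, however large, and no amount of $C^{1}$ control of $\vphi$ can compensate a linear growth by a quantity that vanishes to second order along $\Ga$. Your parenthetical remark about ``absorbing the tangential derivatives of $\vphi$'' cannot enter the computation at all in this ansatz, since $w^{\pm}$ does not contain $\vphi$ as a function.

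The fix is exactly what the paper does: take $w=\vphi+v$ and $\vphi-v$ with $v=\psi(d)$, i.e.\ add the barrier to the \emph{full} extension $\vphi$ rather than to its frozen value $\vphi(p)$. Then the comparison on $\Ga$ is automatic ($\psi\geqs0$ and $g=\vphi$ there), the comparison on the inner boundary $\{d=\de_1\}$ is arranged by normalizing $\psi(\de_1)=R(\Om,f)+\max_\Om\vphi$ against the upper bound $g<R(\Om,f)$ of \lemref{hhen}, and the price is paid in the interior: the terms $a_{ij}D_{ij}\vphi$ and the first derivatives of $\vphi$ now appear in $\bMc_\ep(\vphi\pm v)$ and are absorbed, on the set where $|Dv|\geqs1$, into the quadratic quantity $\Fc=a_{ij}D_ivD_jv$ by choosing $b_1$ large --- this is precisely why $\max_\Om|D\vphi|$ and $\max_\Om|D^2\vphi|$ occur in the constant of \eqref{bge}. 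The remaining ingredients of your outline (uniformity in $\ep\in[0,1]$ because $\ep$ enters only additively in \eqref{ell2}--\eqref{ell}, the rank-one structure of $D^{2}v$ since $d$ is affine, the use of the lower length bound to keep $\min\{1,g^2\}$ away from zero, and the comparison via a linear operator built from $\HMc_\ep(g)-\bMc_\ep(\vphi\pm v)$ with the Hopf maximum principle) are correct and coincide with the paper's argument once the barrier is corrected.
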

\begin{proof}
Let $g$ be a solution of the $\ep$-horizontal minimal equation as
in the statement of the Theorem.

 It suffices to get a priori  estimates for the normal
derivatives at any point $p\in \partial \Om.$ We obtain first the
upper bound  for the normal derivatives constructing an upper
barrier.

We define
\begin{equation}
\psi(d):=\frac{1}{b_1} \ln \left(1 + \frac{ e^{ (R(\Om,f) +
\max\limits_\Om \vphi)\,b_1}-1}{\de_1} d\right )
\end{equation}
 where $b_1$ and $\de_1$ are positive constants to be defined later.

Let $p\in \Om$ and let $\Pi$ be the $n-1$-plane passing through
$p$ as in
\defref{ncon} and let $d(q)=d(q, \Pi), q\in \Om,$ where $d(q, \Pi)$ is the Euclidean distance from $q$ to $\Pi.$ We define $v(q):=
\psi(d(q)), q\in \Om$. Of course, $v(p)=0.$

Let $w=\vphi +v$ and let $\bMc_\ep$ be the strictly elliptic
operator given by
\begin{multline}\label{nelo}
\hfill \bMc_\ep(w):= \sum\limits_{k=1}^{n-1}\left[
g^2\left(1+w_{x_1}^2+\cdots+\widehat{w_{x_k}^2}+\cdots
w_{x_{n-1}}^2\right) +w_t^2+\frac{\ep}{n-1}\right]w_{x_k
x_k}\hfill\\ +
\left(1+\sum\limits_{k=1}^{n-1}w_{x_k}^2\right)w_{tt}
-2\sum\limits_{k=1}^{n-1} w_{x_k} w_t w_{x_k t}
-2g^2\sum\limits_{1\leqs j<k\leqs n-1} w_{x_j} w_{x_k}
w_{x_jx_k}\hfill\\
 +(n-1) g \left( 1 + \sum\limits_{k=1}^{n-1}
w_{x_k}^2\right)
+(n-2)\frac{w_t^2}{g}\hfill\\
:= a_{ij}(g, D w )D_{ij} w +(n-1) g \left( 1 +
\sum\limits_{k=1}^{n-1} w_{x_k}^2\right)
+(n-2)\frac{w_t^2}{g}\\
 \hfill \text{(using summation convention)}\hfill
\end{multline}
\vskip2mm
In view of  inequalities \eqref{ell} and \eqref{ell2}  we have:
\vskip1mm
\begin{multline}\label{nbe1}
\min\{1, g^2\}|\xi|^2\leqs a_{ij}(g, D w )\xi_i\xi_j\leqs\hfill\\
\hfill \hphantom{0000000000000}\Bigl(2 + g^2 (n-1)
+\left(\max\{1,g^2\} (n-2) +1\right)|D w|^2\Bigr)|\xi|^2\,
\text{in}\,\Om \hfill
\end{multline}
\vskip2mm
Now by invoking  the horizontal length estimates (\lemref{hhen}),
it follows that

\vskip1mm
\begin{multline}\label{nbe2}
\min\{1, \min\limits_\Gamma g^2\} |\xi|^2\leqs a_{ij}( g,D w
)\xi_i\xi_j \quad \text{in}\, \Om\hfill\\
\text{Set}\, \La_1:=2 + g^2 (n-1) +\left(\max\{1,g^2\} (n-2) +1\right)|D
w|^2\hfill\\
=2 + g^2 (n-1) +\left(\max\{1,g^2\} (n-2) +1\right)|D \vphi
+ D v|^2\,\text{in}\, \Om.\\
\end{multline}
\vskip2mm
We deduce the following inequalities:
\newpage
\begin{multline}\label{nbe3}
\La_1 <2 + R^2(\Om,f) (n-1) +\left(\max\{1,R^2(\Om,f)\} (n-2)
+1\right)|D \vphi + D v|^2\hfill\\
 \hphantom{\La00}<2 + R^2(\Om,f) (n-1) +2\left(\max\{1,R^2(\Om,f)\} (n-2)
+1\right)(|D \vphi|^2 +| D v|^2)\hfill\\
\hphantom{\La00}\leqs2 + R^2(\Om,f) (n-1)
+2\left(\max\{1,R^2(\Om,f)\} (n-2) +1\right)|D \vphi|^2\hfill\\
\hfill+2\left(\max\{1,R^2(\Om,f)\} (n-2)+1\right) | D v|^2\hfill\\
\hphantom{\La00}\leqs\Bigl(2 + R^2(\Om,f) (n-1)
+2\left(\max\{1,R^2(\Om,f)\} (n-2) +1\right)|D \vphi|^2\Bigr)| D v|^2\hfill\\
\hfill \qquad\qquad+2\left(\max\{1,R^2(\Om,f)\} (n-2)+1\right) | D v|^2,\qquad \text{whenever}\quad | Dv|\geqs 1\hfill\\
\hphantom{\La00}\leqs\Bigl(2 + R^2(\Om,f) (n-1)
+2\left(\max\{1,R^2(\Om,f)\} (n-2) +1\right)(|D \vphi|^2 +1)\Bigr)| D v|^2,\hfill\\
\hphantom{\La00000000000000000000000000000000000}\qquad \text{whenever}\quad | Dv|\geqs 1\hfill\\
\end{multline}
Set
\begin{equation}\label{alp}
 \dd \a_1:=\frac{\Bigl(2 + R^2(\Om,f)
(n-1) +2\left(\max\{1,R^2(\Om,f)\} (n-2) +1\right)(\max\limits_\Om|D
\vphi|^2 +1)\Bigr)}{\min\{1, \min\limits_\Gamma g^2\} }.
\end{equation}
By combining    \eqref{nbe2},       \eqref{nbe3} and \eqref{alp} we have
\begin{multline}\label{2nbe3}
\La_1<\a_1\min\{1, \min\limits_\Gamma g^2\} |Dv|^2\hfill\\
\hphantom{\La00000}< \a_1\, a_{ij}(g, D \vphi + Dv) D_i v D_j v,\qquad \text{whenever}\quad | Dv|\geqs 1\hfill\\
\hphantom{\La00000}<\a_1\,\Fc,\qquad \text{whenever}\quad | Dv|\geqs 1\hfill\\
 \text{where}\quad \Fc:=a_{ij}(g, D \vphi + Dv)D_i v D_j v.\hfill
\end{multline}
Now set  $\La_2:= 1 + |D w|^2= 1 + | D\vphi + D v|^2.$ In the same
way as in proof of the inequalities \eqref{nbe3}, we deduce
~\begin{multline}\label{nbe32}
\La_2<\a_2 \Fc,\qquad \text{whenever}\quad | Dv|\geqs 1\hfill\\
\text{where}\, \a_2=\frac{ 3 + 2 \max\limits_\Om |
D\vphi|^2}{\min\{1, \min\limits_{\Ga}g\}}\hfill
\end{multline}
 From \eqref{nelo}, \eqref{nbe3}, and from the definitions
of $w$, $\Fc$, $\a_1$ and $\a_2$, we get
\begin{multline}\label{n}
\bMc_\ep (\vphi +v)= a_{ij} D_{ij} v + a_{ij} D_{ij}\vphi +(n-1)g
\left( 1 + \sum\limits_{k=1}^{n-1} (\vphi_{x_k}
 +v_{x_k})^2\right)\\
\qquad \qquad\qquad \quad+(n-2)\frac{(\vphi_t +v_t)^2}{g}\hfill\\
\hphantom{nnnn}=a_{ij} \psi''(d)  D_id D_j d +a_{ij} D_{ij}\vphi
+(n-1) g \left( 1 + \sum\limits_{k=1}^{n-1} (\vphi_{x_k}
+v_{x_k})^2\right)\\
\qquad \qquad\qquad \quad +(n-2)\frac{(\vphi_t +v_t)^2}{g}\hfill\\
\qquad\qquad\text{(since $d$ is linear)}\hfill\\
 \hphantom{nnnn}\leqs a_{ij} \frac{\psi''(d)}{\psi'(d)^2}
D_iv D_j v + \La_1 |D^2\vphi| +(n-1) g \left( 1 +
\sum\limits_{k=1}^{n-1}
(\vphi_{x_k} +v_{x_k})^2\right)\\
\qquad \qquad\qquad \quad +(n-2)\frac{(\vphi_t +v_t)^2}{g} \hfill\\
 \hphantom{nnnn}< a_{ij} \frac{\psi''(d)}{\psi'(d)^2}
D_iv D_j v + \La_1 |D^2\vphi|\hfill \\
\qquad \qquad\qquad \quad+\max\left\{(n-1)\max\limits_\Om
g,\,\frac{n-2}{\min\limits_\Ga g}\right\}(1 + |D \vphi + D v|^2)
 \hfill\\
  \hphantom{nnn}  \leqs -b_1 a_{ij}
D_iv D_j v + \La_1 |D^2\vphi|\hfill\\
 \qquad \qquad\qquad \quad +\max\left\{(n-1)\max\limits_\Om
g,\,\frac{n-2}{\min\limits_\Ga g}\right\}(1 + |D \vphi + D v|^2)\hfill\\
 \hphantom{nnnn}< -b_1\Fc
 +  (\max\limits_\Om |D^2\vphi|) \a_1\, \Fc
 + \max\left\{(n-1)\max\limits_\Om
g,\,\frac{n-2}{\min\limits_\Ga g}\right\}\a_2\, \Fc \hfill\\
\end{multline}
\begin{multline*}
\hphantom{nnnn}<\left(-b_1
 +   \a \right)\, \Fc,\quad\text{whenever}\quad | Dv|\geqs 1, \hfill\\
 \hphantom{nnnnnn}\text{where}\, \a:= (\max\limits_\Om |D^2\vphi|) \a_1\,
 + \max\left\{(n-1)\max\limits_\Om
g,\,\frac{n-2}{\min\limits_\Ga g}\right\}\a_2\hfill
\end{multline*}
Now we choose $b_1$ and $\de_1$ such that

\begin{enumerate}

\item $b_1\geqs  \a\,  .$

 \item $\dd \frac{e^{ (R(\Om,f) +\max\limits_\Om \vphi)\, b_1}-1}{ \de_1}\geqs b_1 e^{( R(\Om,f)+\max\limits_\Om \vphi)
 b_1}.$

  With these choices of $b_1$ and $\de_1$ we get that $|D
 v|=\psi'(d) \geqs 1$ if $d \leqs \de_1.$
\end{enumerate}

  Define $\Nc =\{ q\in \Om;\, d(q)< \de_1\}.$ From the choices of $b_1$ and $\de_1$ above we
 deduce that $w=\vphi +v$ is a positive supersolution of \eqref{nelo}; that is, $\bMc_\ep(\vphi +v) <0$ in $\Nc.$   Observe that the
 linear elliptic operator $\Lc$  given by $\Lc[g -(\vphi + v)]:= \bMc_\ep(g)-\bMc_\ep(\vphi +v)= \HMc_\ep(g)-\bMc_\ep(\vphi +v)=-\bMc_\ep(\vphi +v) >0$
 in $\Nc,$ since $g$ is a positive solution of the $\ep$-horizontal minimal equation, by assumption. Furthermore, $\Lc[g -(\vphi + v)]$ satisfies the Hopf
 maximum principle. Recall now that $g<R(\Om,f),$ by \lemref{hhen}. On $\partial \Nc \cap \Om$ we get $g <R(\Om,f)\leqs\vphi +  R(\Om,f)+  \max\limits_\Om
 \vphi= \vphi + v.$  On $\partial \Om \cap\partial \Nc$ we have
  $g=\vphi \leqs \vphi +v.$ Hence  $g \leqs \vphi +v$ in
 $\Nc,$ by the maximum principle. Moreover, $g(p) =\vphi(p)
 +v(p).$ Therefore, $w=\vphi + v$ is an upper barrier (since
 $g(q)\leqs w(q),\, q\in \Nc$ and $g(p)=w(p)$). From this,  the upper bound for the normal derivatives follows.
 We obtain henceforth the desired   boundary gradient upper
 bound.

To obtain the lower bound  for the normal derivatives, we will
construct a lower barrier. Consider $\vphi - v$. Note that
$\vphi(p)-v(p)=\vphi(p)=g(p)
>0.$ Notice that there exists a connected part  $N$ of $\Nc$, containing $p$,  such that $\vphi - v >0$
in $N.$ We derive the following computations.

\begin{multline}\label{be5}
\hfill\bMc_\ep (\vphi-v) = -a_{ij} D_{ij} v +a_{ij} D_{ij} \vphi
+(n-1) g \left( 1 + \sum\limits_{i=1}^{n-1} (\vphi_{x_k}
-v_{x_k})^2\right)\hfill\\
\qquad\qquad\qquad\,+(n-2)\frac{(\vphi_t -v_t)^2}{g}\hfill\\
\hphantom{nnnnnnn}=b_1 a_{ij} D_i vD_j v + a_{ij}D_{ij}\vphi +
(n-1) g \left( 1 + \sum\limits_{i=1}^{n-1} (\vphi_{x_k}
-v_{x_k})^2\right)\hfill\\
\qquad\qquad\qquad\,+(n-2)\frac{(\vphi_t -v_t)^2}{g}\hfill
\end{multline}
\begin{multline*}
\hphantom{nnnnnnn}\geqs b_1\min\{1, \min\limits_\Ga g^2\}
|Dv|^2-\max\limits_\Om |D^2 \vphi|\La_1\hfill\\
\hphantom{nnnnnnnnn}\text{Thus}\hfill\\
 \hfill\bMc_\ep (\vphi-v) >0, \quad \text{whenever}\quad |D v|
\geqs 1,\hfill
\end{multline*}

\vskip1mm
 if $b_1$ is chosen  big enough. So with this choice of
$b_1$, it follows that $\vphi - v$ is a subsolution of
\eqref{nelo}. Noticing that on $\partial \Nc\cap\Om,$
$v=\psi(\de_1)= \max\limits_\Om \vphi +R(\Om,f),$ we deduce that
 on $\partial \Nc\cap\Om$ we
have  $ \vphi -v= \vphi -\max\limits_\Om \vphi -R(\Om,f)<0<g.$
 And on $
\partial \Om \cap \partial \Nc,$ we have $\vphi-v \leqs \vphi=g,$
hence $\vphi -v \leqs g$ on $\Nc$, by the maximum principle  and
$g(p) =\vphi(p) -v(p).$ We thus infer the a priori  lower bound
for the normal derivatives.  Therefore  we obtain the desired a
priori boundary gradient estimates. This completes the proof of
the Theorem.
\end{proof}

\begin{remark}\label{slo}
{\em For  a $C^ {2, \a}$  domain $\Om$ for some $0<\a<1$ whose
boundary has positive mean curvature, we can find a  simpler and
more geometric a priori lower bounds  for the gradient. In fact,
these domains satisfies a {\em boundary slope condition} and the
Euclidean $n$-planes  can be used as lower barriers, since they
are subsolutions of the  $\ep$-horizontal mean equation
\eqref{ehnme}.}
\end{remark}

Recall now that the minimal equation in Euclidean space   is given
by

 \begin{equation}\label{min}
 \diver\left(\frac{\nabla u}{W(u)}\right):= \sum\limits_{i=1}^n\frac {\partial}{\partial
x_i}\left(\frac{ u_{x_i}}{\sqrt{1 +\|\nabla u\|^2_{{\R^n}}}}
\right)=0
 \end{equation}

\begin{definition}\label{euc}

Let $\Om \subset \R^n$ be a $C^ {2, \a}$ bounded convex domain for
some $0<\a<1,$ with boundary $\Ga.$ Let $f\in C^ {2, \a}(\Ga)$. We
define by $\wdh{\vphi}\in  C^ {2, \a}(\overl{\Om})$ the unique
minimal solution of the minimal equation in Euclidean space in $
\Om$, taking the prescribed boundary data $f$ on $\Ga$ \cite{JS2}.
\end{definition}

\vskip2mm

We recall that the  maximum principle and the use of the Euclidean
$n$-planes as barriers ensures that $ \min\limits_\Ga f
<\wdh{\vphi}< \max\limits_\Ga f$ in $\Om.$

 The following  Lemma is well-known and we  will use it in the proof of the next
 Lemma. We will write a proof for completeness.

\begin{lemma}\label{emine}
  Let $\Om\subset\R^n$ be a $C^ {2, \a}$ bounded convex
domain for some $0<\a<1$. Let $\Ga=\partial \Om$ and let $f\in
C^{2, \a}(\Ga)$ be a positive function.

For each $s\in [1/2,1]$, let $\wdh{\vphi_s}\in C^ {2,
\a}(\overl{\Om})$ be the unique solution of \eqref{min} taking the
prescribed boundary value data $(2s -1) f + 2(1-s)\min\limits_\Ga
f $ on $\Ga.$ Then there exists a constant $K$, independent of
$s$, such that

\begin{equation}\label{eto}
\max\limits_{\overl{\Om}}|D \wdh{\vphi_s}| +
\max\limits_{\overl{\Om}}|D^2\wdh{\vphi_s}|\leqs K
\end{equation}

\end{lemma}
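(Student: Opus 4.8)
\emph{Proof proposal.} The plan is to run the classical a priori estimates for the minimal surface equation \eqref{min} and, at each step, to check that the constants depend on the boundary datum only through quantities that are uniformly bounded in $s$. Write $f_s:=(2s-1)f+2(1-s)\min_\Ga f$ for $s\in[1/2,1]$. Since $2s-1\geqs 0$, $2(1-s)\geqs 0$ and $(2s-1)+2(1-s)=1$, each $f_s$ is a convex combination of $f$ and the constant $\min_\Ga f$; hence $\min_\Ga f\leqs f_s\leqs\max_\Ga f$ on $\Ga$, while $Df_s=(2s-1)Df$ and $D^2f_s=(2s-1)D^2f$, so $\|f_s\|_{C^{2,\a}(\Ga)}\leqs\|f\|_{C^{2,\a}(\Ga)}$, all bounds being independent of $s$. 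By the maximum principle, comparing $\wdh{\vphi_s}$ with the affine functions (Euclidean hyperplanes are solutions of \eqref{min}), one obtains $\min_\Ga f\leqs\wdh{\vphi_s}\leqs\max_\Ga f$ in $\Om$, and therefore $\sup_{\overl{\Om}}|\wdh{\vphi_s}|\leqs\max_\Ga|f|$ uniformly in $s$.

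First I would establish a uniform boundary gradient estimate. Because $\Om$ is convex and of class $C^{2,\a}$, its boundary has nonnegative mean curvature, so the standard barrier construction for the minimal surface equation (see \cite[\S14]{GT}, or Jenkins--Serrin \cite{JS2}) produces upper and lower barriers at each point of $\Ga$, built from the boundary datum and the Euclidean distance to $\Ga$. This yields $\max_\Ga|D\wdh{\vphi_s}|\leqs C_1$ with $C_1$ depending only on $n$, on $\Om$, and on $\|f_s\|_{C^2(\Ga)}$, hence on $n$, $\Om$, and $\|f\|_{C^2(\Ga)}$ only, independent of $s$. Then, combining this with the interior gradient estimate for the minimal surface equation (Bombieri--De Giorgi--Miranda; cf.\ \cite[Ch.~16]{GT}), which bounds $|D\wdh{\vphi_s}(q)|$ in terms of $n$, $\sup_{\overl{\Om}}|\wdh{\vphi_s}|$ and $\dist(q,\Ga)$ only --- or directly with the global gradient estimate for equations of mean curvature type on convex domains \cite[Ch.~15]{GT} --- one gets a uniform global bound $\max_{\overl{\Om}}|D\wdh{\vphi_s}|\leqs C_2$, with $C_2$ independent of $s$.

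With $\max_{\overl{\Om}}|D\wdh{\vphi_s}|\leqs C_2$ in hand, equation \eqref{min} becomes, along $\wdh{\vphi_s}$, a linear equation $a^{ij}(D\wdh{\vphi_s})D_{ij}\wdh{\vphi_s}=0$ that is uniformly elliptic with uniformly bounded coefficients, the ellipticity constants depending only on $C_2$. Applying the global (up-to-the-boundary) $C^{1,\be}$ estimate for quasilinear uniformly elliptic equations (Ladyzhenskaya--Uraltseva; \cite[Thm.~13.7]{GT}), and using that $\Ga\in C^{2,\a}$ while the $f_s$ are uniformly bounded in $C^{2,\a}(\Ga)$, yields $\|\wdh{\vphi_s}\|_{C^{1,\be}(\overl{\Om})}\leqs C_3$ for some $\be\in(0,\a]$, uniformly in $s$. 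Consequently $a^{ij}(D\wdh{\vphi_s})$ is bounded in $C^{\be}(\overl{\Om})$ uniformly in $s$, and the global Schauder estimate \cite[Thm.~6.6]{GT}, applied to $a^{ij}(D\wdh{\vphi_s})D_{ij}\wdh{\vphi_s}=0$ with boundary datum $f_s\in C^{2,\a}(\Ga)$, gives $\|\wdh{\vphi_s}\|_{C^{2,\be}(\overl{\Om})}\leqs K$ with $K$ independent of $s$; in particular \eqref{eto} holds.

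The only genuine content is bookkeeping: one must check that in each of the classical estimates invoked above the constant depends on the boundary value only through $\min_\Ga f$, $\max_\Ga f$, $\max_\Ga|Df_s|$ and $\max_\Ga|D^2f_s|$ (besides $n$ and the geometry of $\Om$), and that each of these is controlled by the corresponding quantity for $f$ uniformly in $s$ --- which is immediate from the fact that $f_s$ is a convex combination of $f$ and a constant. I expect the step most in need of care to be the boundary gradient estimate, where the convexity of $\Om$ is used to guarantee existence of the barriers; the remaining steps are routine applications of interior gradient estimates and global Schauder theory.
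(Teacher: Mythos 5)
Your proof is correct and follows essentially the same route as the paper's: uniform control of the data $f_s$ (the paper gets it from the extension lemma \cite[Lemma 6.38]{GT}, you from the convex-combination observation), $C^0$ bounds by the maximum principle, barrier-based boundary gradient estimates on the convex domain (the paper re-runs the construction of \thmref{ge1}, you invoke the standard minimal-surface barriers on mean-convex domains -- an inessential difference), and then the Ladyzhenskaya--Ural'tseva global H\"older gradient estimate \cite[Theorem 13.7]{GT} followed by global Schauder theory \cite[Theorem 6.6]{GT}, the same two citations the paper uses. The one point to watch is that a boundary gradient bound combined with the Bombieri--De Giorgi--Miranda interior estimate does not by itself yield a global gradient bound (the interior estimate degenerates as $\dist(q,\Ga)\to 0$); your alternative route via the global gradient estimate of \cite[Ch.~15]{GT} -- or the remark that for \eqref{min} each directional derivative satisfies an elliptic equation without zero-order term, so $|D\wdh{\vphi_s}|$ attains its maximum on $\Ga$ -- is what actually closes that step, which the paper itself leaves implicit.
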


\begin{proof} For the readers convenience, we   outline a
proof, as follows. First, note that the Extension Lemma
\cite[Lemma 6.38]{GT} provides a $C^{2,\, \a}(\overl{\Om})$
extension $\vphi_s$ of $(2s -1)f  + 2(1-s)\min\limits_\Ga f\,$
such that $\max\limits_{\overl{\Om}}|{\vphi_s}|+
\max\limits_{\overl{\Om}}|D {\vphi_s}| +
\max\limits_{\overl{\Om}}|D^2{\vphi_s}|  $ is bounded by a
constant $C$ independent of $s,\, s\in [1/2, 1].$

   As we have
observed before, the maximum principle yields\\
$\min\limits_\Ga f\leqs\wdh{\vphi_s}\leqs \max\limits_\Ga f,$
since this inequality occurs on the boundary. Note also that  that
the first eigenvalue of the  matrix associated  to equation
\eqref{min} is $1$.
 Now using the extension $\vphi_s$, we can follow, step by step, the
proof of \thmref{ge1}, to ensure the bounds for the first
derivatives of $\wdh{\vphi_s}$, independent of $s.$ Then by
applying the global a priori H\"older estimates of Ladyzhenskaya
and Ural'tseva \cite{L-U},  \cite[Theorem 13.7]{GT}, we have global Hölder a
priori estimates for the first derivatives of $\wdh{\vphi_s}$,
independent of $s.$ Finally, the global a priori Schauder
estimates \cite[Theorem 6.6]{GT} shows that the $C^{2,\,
\a}(\overl{\Om})$ norm $|\wdh{\vphi_s}|_{C^{2,\,
\a}(\overl{\Om})}$, is bounded by a constant $K$, independent of
$s,\, s\in [1/2, 1]$. This gives the desired estimate.

\end{proof}

Let $\ell: [0, 1]\goto \R$ be a continuous function satisfying
$0\leqs\ell(s)\leqs 1,$   $\ell(1)=1$  and $\ell(s)=0,\, s\in [0,
1/2].$ For each $s\in [0, 1],$ let us now turn attention to the
positive solutions of the following P.D.E:

\begin{multline}\label{fme}
 \sum\limits_{k=1}^{n-1}\left[
g^2\left(1+g_{x_1}^2+\cdots+\widehat{g_{x_k}^2}+\cdots
g_{x_{n-1}}^2\right) +g_t^2+\frac{\ep}{n-1}\right]g_{x_k
x_k}\hfill\\ +
\left(1+\sum\limits_{k=1}^{n-1}g_{x_k}^2\right)g_{tt}
-2\sum\limits_{k=1}^{n-1} g_{x_k} g_t g_{x_k t}
-2g^2\sum\limits_{1\leqs j<k\leqs n-1} g_{x_j} g_{x_k}
g_{x_jx_k}\hfill\\
 +\Bigl[(n-1) g \left( 1 + \sum\limits_{k=1}^{n-1}
g_{x_k}^2\right)
+(n-2)\frac{g_t^2}{g}\Bigr]\ell (s)\hfill\\
\hphantom{0000000000}=0\hfill
\end{multline}

Note that for  $s=1,$  equation \eqref{fme} reduces to the
$\ep$-horizontal minimal equation.

For later purposes, we need a  slight refined generalization of
the boundary gradient estimates for $C^ {2, \a}$  domains and
boundary data, on account of  \lemref{emine} and the above
observations.

\begin{theorem}[{\bf{\em Uniform boundary gradient estimates II}}]\label{ge2}

  Let $\Om\subset\partial_\infty \Hip^n\times\R$ be a
$C^ {2, \a}$ bounded convex domain for some \\$0<\a<~1$. Let
$\Ga=\partial \Om$ and let $f\in C^ {2, \a}(\Ga)$ be a positive
function. For each $s\in [1/2,1]$, let $g_s\in C^{2,
\a}(\overl{\Om})$ be a  positive solution of \eqref{fme} in $\Om,$
such that $g_s=  (2s -1)f + 2(1-s)\min\limits_\Ga f$ on $\Ga.$

Then, for each $s\in [1/2, 1]$ the following estimate holds.

\begin{multline}\label{bge*}
\hfill \min\limits_\Ga f\leqs \max\limits_{\overl{\Om}} g_s< R(\Om,f)\quad \text{on} \quad\overl{\Om}\hfill\\
 \text{where} \quad R(\Om,f) \quad \text{is given by \defref{qua}}\hfill\\
\text{and}\hfill\quad \max\limits_{\Ga}|D g_s|\leqs C \quad \hfill\\
\hfill \text{where}\quad C=C(\min\limits_{\Ga} f, \max
\limits_{\Ga} f,r(\Om),K),\hfill
\end{multline}
 where $K$ is given by \lemref{emine}.

\end{theorem}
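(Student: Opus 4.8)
The plan is to mimic the proof of \thmref{ge1}, but now tracking carefully that all the constants that appear can be bounded in terms of the data allowed in the statement, namely $\min_\Ga f$, $\max_\Ga f$, $r(\Om)$, and the constant $K$ of \lemref{emine}, uniformly in $s\in[1/2,1]$. First I would dispose of the $C^0$ estimate: the equation \eqref{fme} for $s\in[1/2,1]$ has exactly the same principal part as \eqref{ehnme}, and since $\ell(s)\in[0,1]$ the horocylinders $y=c$ are still subsolutions and the vertical geodesic $n$-planes are still supersolutions (the zeroth-order term only gets multiplied by a nonnegative factor $\ell(s)\le1$, which preserves the sign inequalities used in \lemref{hhen}). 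Hence the argument of \lemref{hhen} applies verbatim and gives $\min_\Ga\big((2s-1)f+2(1-s)\min_\Ga f\big)\leqs g_s<R(\Om^{s})$, where the boundary data is squeezed between $\min_\Ga f$ and $\max_\Ga f$; so $\min_\Ga f\leqs g_s< R(\Om,f)$ on $\overl\Om$, because $R$ is monotone in $\max_\Ga f$ and the boundary data never exceeds $\max_\Ga f$.

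Next comes the gradient estimate. The key point is that the role played by the fixed extension $\vphi$ in \thmref{ge1} is now played by the Euclidean minimal extension $\wdh{\vphi_s}$ of the boundary data $(2s-1)f+2(1-s)\min_\Ga f$ given by \defref{euc}; by \lemref{emine} we have $\max_{\overl\Om}|D\wdh{\vphi_s}|+\max_{\overl\Om}|D^2\wdh{\vphi_s}|\leqs K$ with $K$ independent of $s$, and $\min_\Ga f\leqs\wdh{\vphi_s}\leqs\max_\Ga f$ on $\overl\Om$. I would build the barrier $w=\wdh{\vphi_s}+v$ with $v=\psi(d)$ exactly as in \thmref{ge1}, and then rerun the chain of inequalities \eqref{nbe1}--\eqref{n}. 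Every bound there is expressed through $\La_1$, $\a_1$, $\a_2$, $\a$, which in turn involve only $\min_\Ga g_s$, $\max_{\overl\Om} g_s$, $R(\Om,f)$, $\max_\Om\wdh{\vphi_s}$, $\max_\Om|D\wdh{\vphi_s}|$, $\max_\Om|D^2\wdh{\vphi_s}|$; by the $C^0$ bound just obtained and by \lemref{emine} each of these is controlled by $\min_\Ga f$, $\max_\Ga f$, $r(\Om)$ (which enters $R(\Om,f)$), and $K$ — uniformly in $s$. One small new feature is the factor $\ell(s)$ multiplying the zeroth-order term of \eqref{fme}: since $0\leqs\ell(s)\leqs1$, the estimate $\bMc_{\ep,s}(\wdh{\vphi_s}+v)\leqs(-b_1+\a)\Fc$ only improves (the positive lower-order term one is subtracting can only get smaller), so the same choice of $b_1,\de_1$ works and gives an upper barrier; similarly for the lower barrier $\wdh{\vphi_s}-v$, where one needs that the added lower-order term is $\geqs0$, again guaranteed by $\ell(s)\geqs0$ together with positivity of $g_s$. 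Thus the upper and lower bounds for the normal derivative of $g_s$ at each $p\in\partial\Om$ follow with a constant $C=C(\min_\Ga f,\max_\Ga f,r(\Om),K)$.

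The main obstacle — really the only nontrivial one — is bookkeeping: making sure that no hidden dependence on $s$ sneaks in. The two places to be careful are (i) that the Euclidean extension $\wdh{\vphi_s}$ has $C^2$ norm bounded independently of $s$, which is precisely the content of \lemref{emine} and so is already available; and (ii) that the lower bound $\min\{1,\min_\Ga g_s^2\}$ appearing in \eqref{nbe2} is bounded below away from zero uniformly in $s$, which follows because $\min_\Ga g_s\geqs\min_\Ga f>0$ by the $C^0$ estimate, where the boundary data of $g_s$ is at least $\min_\Ga f$ (indeed $(2s-1)f+2(1-s)\min_\Ga f\geqs\min_\Ga f$ for $s\in[1/2,1]$ since $f\geqs\min_\Ga f$). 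With these two facts in hand, the constants $\a_1,\a_2,\a,b_1,\de_1$ produced in the proof of \thmref{ge1} become genuinely uniform in $s$, and the conclusion $\max_\Ga|Dg_s|\leqs C$ with the claimed dependence is immediate. I would also remark, as in \remref{slo}, that since $\Om$ is $C^{2,\a}$ and convex one could alternatively use Euclidean $n$-planes as lower barriers for the sharper lower gradient bound, but the barrier $\wdh{\vphi_s}-v$ already suffices and keeps the argument parallel to \thmref{ge1}.
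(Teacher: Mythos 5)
Your proposal is correct and follows essentially the same route as the paper: establish the $C^0$ bound by noting that horocylinders and vertical geodesic planes remain sub- and supersolutions of \eqref{fme} since $0\leqs\ell(s)\leqs1$, then rerun the barrier construction of \thmref{ge1} with the Euclidean minimal extension $\wdh{\vphi_s}$ of \defref{euc} playing the role of $\vphi$, using \lemref{emine} to make all constants uniform in $s$. Your extra bookkeeping (the sign checks on the $\ell(s)$-weighted zeroth-order term and the uniform lower bound $\min_\Ga g_s\geqs\min_\Ga f$) is exactly what the paper's "follow step by step" appeal to \thmref{ge1} implicitly requires.
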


\begin{proof}   Let $\wdh{\vphi_s}$ be the unique minimal Euclidean extension
 of $ (2s -1)f + 2(1-s)\min\limits_\Ga f$
satisfying equation \eqref{min}.

Notice that, as in the proof of the \thmref{ge1}, we have the
following ingredients:

\noi {\em   1}:  First, it is routine to check  that $
\min\limits_\Ga f\leqs g_s|_\Ga \leqs \max\limits_\Ga f.$ On the
other hand, observe that  the Euclidean $n$-planes are
subsolutions of \eqref{fme}. Moreover, the vertical geodesic
$n$-planes (\remref{rm1} (2)) are  still supersolutions of
\eqref{fme}. Consequently, the length estimate inferred in
\secref{she} hold with  exactly  the same statement as in
\lemref{hhen}. Thus, by invoking the length estimate, it follows
that $\min\limits_\Ga f< g_s< R(\Om,f)$ on $\Om$ and  $\min\limits_\Ga f\leqs g_s< R(\Om,f)$ on $\overl{\Om}.$

\noi {\em   2}: Secondly, taking into account  $0\leqs
\ell(s)\leqs 1,$ inequality \eqref{eto}, and inequalities
\eqref{ell} and \eqref{ell2}, we are able to follow the procedure
of the proof of \thmref{ge1}  to obtain the desired estimate.

Noticing that  in view of \lemref{emine} and its proof, we have
$K$ in the place of $\max\limits_{\Om}|D \wdh{\vphi_s}| +
\max\limits_{\Om}|D^2\wdh{\vphi_s}|$ and  $\max\limits_\Ga f$
instead of $\max\limits_\Om \wdh{\vphi_s}$ in the estimate.

This accomplishes the proof of the theorem.

\end{proof}

Next, we show that a positive solution $g\in C^2(\Om)\cap
C^0(\overl{\Om})$ of \eqref{ehnme} has a uniform modulus of continuity
along a bounded convex domain $\Om.$ \vskip2mm
 Let $\Ga\subset\partial_\infty \Hip^n\times\R$ be a bounded convex
curve  and let $f\in C^ {0 }(\Ga).$ Given $\e>0$, the continuity
of $f$  yields the existence of a positive constant $\de_0>0$,
 such that $|f(p')-f(p)|<\e/3, $ if $|p' -p| <\de_0,$ $p' ,p\in
 \Ga.$

 We now define  the barriers $\vphi^ \pm$ at $p\in \Ga$  that we need in the next
 theorem. Let

 \begin {equation}\label{bar}
\vphi^ \pm(q):= f(p) \pm\frac{\e}{3} \pm  R(\Om,f)\frac{ \ln ( 1 + |
q -p|^ 2)}{\ln (1 + \de_0^ 2)}, \quad q\in \Om
 \end{equation}
 where $\de_0=\de_0
(f)$ is defined in the above paragraph.

\begin{theorem} [{\bf{\em Uniform modulus of continuity}}]\label{mod}
  Let $\Om\subset\partial_\infty \Hip^n\times\R$ be a $C^ {0}$
bounded domain. Let $\Ga=\partial \Om$  and let $f\in C^ {0
}(\Ga)$ be a positive function. Let  $g\in C^{2}(\Om) \cap
C^{0}(\overl{\Om})$ be a positive solution of the $\ep$-
horizontal minimal equation \eqref{ehnme} in $\Om,$ such that
$g=f$ on $\Ga.$ Assume that $\Om$ is convex.

Then, it follows that given $\e>0$ there is $\de >0$ such that if
$ p\in \partial \Om$ and  $q\in \Om, $ satisfy $|q-p| <\de,$ then
$|g(q)-g(p)| <\e,$ where \\$\de =\de(\e,\de_0, \min\limits_{\Ga}
g, \max \limits_{\Ga} g, r(\Om), \max \limits_{ \Om}
|D\vphi^ \pm|, \max \limits_{\Om} |D^2\vphi^ \pm|).$

\end{theorem}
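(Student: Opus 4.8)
The plan is to prove Theorem \ref{mod} by constructing, at each boundary point $p\in\Ga$, the two local barriers $\vphi^{\pm}$ defined in \eqref{bar} and comparing them with $g$ by the maximum principle on a suitable neighborhood $\Nc$ of $p$ in $\Om$. First I would fix $\e>0$, obtain $\de_0=\de_0(f)$ from the uniform continuity argument preceding the statement, and consider the set $\Nc=\{q\in\Om : |q-p|<\de_0\}$. On this set I claim $\vphi^{+}$ is a supersolution and $\vphi^{-}$ a subsolution of \eqref{ehnme}. The heart of the matter is a direct computation of $\HMc_\ep(\vphi^{\pm})$ using that $\vphi^{\pm}(q)=f(p)\pm\tfrac{\e}{3}\pm R(\Om,f)\,\psi(|q-p|^2)$ with $\psi(\tau)=\ln(1+\tau)/\ln(1+\de_0^2)$: one writes $D_i\vphi^{\pm}$ and $D_{ij}\vphi^{\pm}$ explicitly, noting $D_i(|q-p|^2)=2(q-p)_i$ and $D_{ij}(|q-p|^2)=2\de_{ij}$, so that the second-derivative terms split into a strongly negative contribution from $\psi''<0$ (the logarithm is concave) along the radial direction, plus controlled zeroth-order terms $(n-1)g(1+\sum\vphi_{x_k}^2)+(n-2)g_t^2/g$. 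Using the ellipticity bounds \eqref{ell} and \eqref{ell2} together with the horizontal length estimate $\min_\Ga f<g<R(\Om,f)$ from \lemref{hhen}, the negative radial term dominates once we shrink the neighborhood, exactly as in the barrier estimates of \thmref{ge1}; this is where the dependence of $\de$ on $\min_\Ga g,\ \max_\Ga g,\ r(\Om),\ \max_\Om|D\vphi^{\pm}|,\ \max_\Om|D^2\vphi^{\pm}|$ enters.

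Next I would check the boundary inequalities. On $\partial\Om\cap\partial\Nc$ the term $\pm R(\Om,f)\psi(|q-p|^2)$ is nonnegative for $\vphi^{+}$ and nonpositive for $\vphi^{-}$ (indeed it vanishes only at $q=p$), while $|f(q)-f(p)|<\e/3$ by choice of $\de_0$, so $\vphi^{-}(q)\le f(q)=g(q)\le\vphi^{+}(q)$ there. On the interior part $\partial\Nc\cap\Om$, where $|q-p|=\de_0$, the logarithmic term equals $\pm R(\Om,f)$, hence $\vphi^{+}(q)=f(p)+\tfrac{\e}{3}+R(\Om,f)\ge R(\Om,f)>g(q)$ and $\vphi^{-}(q)=f(p)-\tfrac{\e}{3}-R(\Om,f)\le 0<g(q)$, again using \lemref{hhen}. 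Therefore the classical maximum principle (applied to the linear operator $\Lc[g-\vphi^{\pm}]=\HMc_\ep(g)-\bMc_\ep(\vphi^{\pm})$, which is elliptic by the strict ellipticity established via \eqref{hesn}) gives $\vphi^{-}\le g\le\vphi^{+}$ on all of $\Nc$.

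Finally, evaluating the barrier bound at an arbitrary $q\in\Om$ with $|q-p|<\de$ for $\de\le\de_0$ to be chosen, we get
\[
|g(q)-g(p)|=|g(q)-f(p)|\leqs \frac{\e}{3}+R(\Om,f)\frac{\ln(1+|q-p|^2)}{\ln(1+\de_0^2)}.
\]
It then suffices to pick $\de>0$ small enough (depending only on $\e$, $\de_0$, and $R(\Om,f)$, hence on the listed quantities via \defref{qua}) so that $R(\Om,f)\,\ln(1+\de^2)/\ln(1+\de_0^2)<\e/3$; this forces $|g(q)-g(p)|<\tfrac{2\e}{3}<\e$, uniformly in $p\in\Ga$. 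The main obstacle is the first step: verifying cleanly that $\vphi^{\pm}$ are genuine super/subsolutions of the quasilinear operator $\HMc_\ep$ on $\Nc$, i.e. that the concavity of the logarithm produces enough negativity in the $a_{ij}D_{ij}\vphi^{\pm}$ term to absorb both the Hessian cross-terms and the nonnegative zeroth-order curvature terms, uniformly over $p$ and over the admissible range of $g$ controlled by \lemref{hhen}; once that estimate is in hand (mirroring the computation \eqref{n} in \thmref{ge1}), the comparison and the final choice of $\de$ are routine.
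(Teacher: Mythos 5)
There is a genuine gap, and it sits exactly at the step you yourself flag as the main obstacle: $\vphi^{\pm}$ alone are \emph{not} super/subsolutions of the operator near $p$, so the comparison you propose cannot be run with them. Compute the Hessian of the radial term: with $\tau=|q-p|^2$ one has
\begin{equation*}
D_{ij}\Bigl(\ln(1+|q-p|^2)\Bigr)=\frac{2\,\de_{ij}}{1+|q-p|^2}-\frac{4\,(q-p)_i(q-p)_j}{\bigl(1+|q-p|^2\bigr)^2},
\end{equation*}
which at $q=p$ equals $2\,\de_{ij}$, i.e.\ it is \emph{positive definite}; the concavity of $\ln$ only acts radially and only away from $p$. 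Hence near $p$ the second--order contribution $a_{ij}D_{ij}\vphi^{+}$ is positive (of size comparable to $\tfrac{2R(\Om,f)}{\ln(1+\de_0^2)}\,\mathrm{trace}(a_{ij})$), and since the zeroth--order terms $(n-1)g\bigl(1+\sum\vphi_{x_k}^2\bigr)+(n-2)\vphi_t^2/g$ are also positive, one gets $\bMc_\ep(\vphi^{+})>0$ at and near $p$: $\vphi^{+}$ fails to be a supersolution precisely where you need it. Symmetrically, $\vphi^{-}$ need not be a subsolution, because near $p$ its second--order term is strongly negative, with a coefficient that blows up as $\de_0\to 0$, and can overwhelm the bounded zeroth--order terms. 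Note also that your argument never uses the convexity of $\Om$, which the theorem assumes --- a sign that an ingredient is missing.

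The paper's proof uses $\vphi^{\pm}$ only to control the boundary data (the coefficient $R(\Om,f)$ in \eqref{bar} guarantees $\vphi^{-}\leqs f\leqs\vphi^{+}$ on \emph{all} of $\Ga$, not just near $p$), and manufactures the super/subsolution by adding the auxiliary functions $v^{\pm}(q)=\psi^{\pm}(d(q))$, where $d$ is the distance to a supporting $(n-1)$-plane $\Pi$ at $p$ (this is where convexity enters, via \defref{ncon}) and $\psi^{\pm}$ satisfies $\psi''=-b_1(\psi')^2$ with $b_1$ chosen large, exactly as in \thmref{ge1}. The term $-b_1\,a_{ij}D_iv\,D_jv$ is what absorbs both $\La_1|D^2\vphi^{\pm}|$ and the zeroth--order terms, making $w^{+}=\vphi^{+}+v^{+}$ a supersolution and $w^{-}=\vphi^{-}-v^{-}$ a subsolution of \eqref{nelo} on $\Nc=\{d<\de_1\}$; the boundary comparison on $\partial\Nc\cap\Om$ is supplied by $\psi^{+}(\de_1)=R(\Om,f)>g$ (via \lemref{hhen}), not by $\vphi^{+}$ alone. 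One then concludes by choosing $\de<\min\{\de_0,\de_1\}$ so small that $\vphi^{+}(q)<f(p)+2\e/3$ and $0<v^{+}(q)<\e/3$ when $|q-p|<\de$ (and similarly for the minus barrier), which is essentially your final step. So your boundary checks and the last estimate are fine in spirit, but the argument cannot be repaired without reinstating the $v^{\pm}$ (or some other strongly negative correction tied to the supporting plane); as written, the claimed super/subsolution property is false.
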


\begin{proof}
The proof is carry out by mimicking the proof of the
 boundary gradient estimates, using  the barriers $\vphi^ \pm$  (instead of the function $\vphi$). Let $\e >0$.

Notice that $f\leqs \vphi^ +$ on $\Ga$ and  $f\geqs \vphi^ -$ on
$\Ga.$

We need to prove that we  can  find a suitable constant $\de
>0$ such that if $|q-p| <\de,\ q\in \Om, p\in \Ga,$ then both
inequalities  $ g(q) < f(p) + \e $ and $ g(q)> f(p) -\e$ hold.

To obtain the first inequality we use the barrier $w^+=\vphi^ + +
v^+$, where the function $v^+$  is  defined by $v^+(q):=
\psi^+(d(q)), q\in \Om$, where
\begin{equation}
\psi^+(d):=\frac{1}{b_1} \ln \left(1 + \frac{ e^{ R(\Om,f)
\,b_1}-1}{\de_1} d\right )
\end{equation}

Indeed working exactly as in the proof of \thmref{ge1}, we can
deduce that $w^+$ is a supersolution of \eqref{nelo} in $\Nc,$ and
$g\leqs \vphi^+ + v^+$ in $\Nc,$ where $\Nc$ is defined in the
proof of \thmref{ge1}. Next,  we are able to choose $\de>0$ small
enough, $\de<\de_0$, $\de< \de_1,$ such that if $|q-p|<\de,$ then
$0\leqs\vphi^ +(q) < f(p)+ 2\e/3$ and $0<v^+(q)<\e/3.$ We infer
therefore that if $|q-p|<\de$ then $ g(q) < f(p) + \e,$ as
desired.

In order to achieve the proof of the theorem, we now define $v^-(q):=\\
\psi^-(d(q)), q\in \Om$, where
\begin{equation}
\psi^-(d):=\frac{1}{b_1} \ln \left(1 + \frac{ e^{ (R(\Om,f)
+\max\limits_{\Om}|\vphi^-|)  \,b_1}-1}{\de_1} d\right )
\end{equation}

Then, we can use the barrier $w^-=\vphi^- - v^-$, working as
before, to deduce that $w^-$ is a subsolution of \eqref{nelo} in
$\Nc,$ and finally, to obtain that $g(q)> f(p)-\e$. This
accomplishes the proof of the Theorem.

\end{proof}

\section{Uniform  global gradient estimates}\label{sge}

In this section we are able to obtain uniform a priori global
gradient estimates for the $\ep$-horizontal minimal equation in
two independent variables.
 The a priori interior gradient bound for the minimal equation in Euclidean space in two independent variables was established   by R. Finn \cite{Fin}. The $n$-dimensional case was done by Bombieri, De Giorgi and Miranda \cite{B-G-M}.  Equations of minimal type were first studied  by Finn \cite{Fin}. He established   a priori estimates for the gradient of a solution. Then H. Jenkins and J. Serrin \cite{JS2} obtained curvature estimates for some kind of such equations.

  In his fundamental paper L. Simon   \cite{Si}, derived  a priori gradient estimates, for general mean curvature type equations in two variables. His results has been be applied to many mean curvature equations in two variables in several  spaces \cite{asi}, \cite{E-N-SE}. However, due to the geometry of the ambient space,  the horizontal minimal equation does not fit the structure conditions of the equations of mean curvature type, established by L. Simon. It does not match either the structure conditions in \cite[Theorem 5.2]{GT}.

 Moreover,  the technique developed by L. Cafarelli, L. Nirenberg and J. Spruck in \cite{CNS} to obtain a priori global gradient estimates (depending on the a priori $C^0$ estimates and on the a priori $C^1$  boundary gradient estimates),    cannot be applied  to the present situation. On the contrary, the method in \cite{CNS} has been applied to the mean curvature equation in hyperbolic space in  \cite{N-SE} and \cite{Ba-SE} and to a vertical constant mean curvature equation in warped product \cite{D-R}.  We notice that, in some cases, the a priori global $C^1$ estimates depends on   the derivation of both a priori $C^0$ estimates and  a priori $C^1$ boundary gradient estimates, see, for instance, \cite{N-Sp} and \cite{G-SE}.

   Finally, we remark that the general a priori gradient estimates inferred by J. Spruck  \cite{Sp}, are adapted for the vertical mean curvature equation to study many problems in several product spaces $M \times \R,$ where $M$ is a Riemannian manifold  \cite{Be-SE2}, \cite{G-R}, \cite{SE-T4}, \cite{C-C}.

\vskip2mm

 Next, we follow the techniques derived in
\cite{Ba}. We do the analysis in two independent variables, for the reasons that will be clear in the proof.

Notice that certain horizontal  minimal graphs in two independent variables  are {\em Killing graphs} \cite{Sa2}.  That is they are graphs with respect to   the coordinates system given by the $1$-parameter group of isometries $(x, y, t)\goto (\la\, x, \la\, y, t),\, \la >0$ of  $\hip^2\times \R$. This group is constituted of hyperbolic translations of the ambient space. Noticing also that by
employing the translations $(\overl{x}, \overl{y}, t)\goto (\la
\overl{x}, \la \overl{y}, t):=(x, y, t),$ we obtain another
function $y=g_\la(x, t),\, (x, t),$ satisfying the equation \eqref{eh2me}  for
$\ep\la^ 2$ in the place of $\ep$ (see \eqref{ehnmel}).

 Due to the techniques employed, we are forced to make a  strong  constraint on the horizontal length to obtain the uniform global $C^1$ gradient estimates. Nevertheless, the assumption is invariant by hyperbolic translations, so it is
  compatible with the geometry of the ambient space.

\vskip2mm

We recall that if $n=2$ the $\ep$-horizontal minimal equation becomes
\begin{equation}\label{eh2me}
\hfill \HMc_\ep (g)=g_{xx} (g^2 +g_t^2 +\ep) + g_{tt} (1 +g_x^2)
-2g_xg_tg_{xt} +g(1 +g_x^2)=0,\hfill
\end{equation}

\vskip2mm
\begin{theorem}[{\bf{\em Uniform global gradient estimates I}}]\label{gge}
Let $\Om\subset\partial_\infty \Hip^2\times\R$ be a $C^1$  bounded
domain.  Let $g\in C^{2, \a}(\Om)\cap C^1(\overl{\Om})$ be a
positive solution of the $\ep$-horizontal minimal equation
\eqref{eh2me} in $\Om.$ Assume there exist constants $c_1$ and
$c_2$, and $c_3,$ independent of $\ep,$ such that $0< c_1\leqs
g\leqs c_2$ on $\overl{\Om}$ and $ |D g|\leqs c_3$ on $\partial
\Om.$  Assume further that $\dd c_2 <c_1
+c_1\sqrt{\frac{\pi}{2}}.$ Then there exists a constant $C$
depending only on $c_1, c_ 2$ and $c_3$ such that $ |D g| \leqs C$
on the whole $\overl{\Om}.$

\end{theorem}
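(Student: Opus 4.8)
The plan is to use a maximum principle argument applied to an auxiliary function built from the gradient of $g$ together with a carefully chosen multiplier, in the spirit of the a priori gradient estimates of L.~Simon~\cite{Si} and, more directly, the techniques of~\cite{Ba}. Since $\Om \subset \partial_\infty \hip^2\times\R$ is two-dimensional, the graph $S$ of $g$ is a minimal surface, and the decisive simplification in two variables is that $W = g^2(1+g_x^2) + g_t^2$ (and its regularized version with $\ep$) controls the gradient, so it suffices to bound $W$, or equivalently $\log W$, on $\overl\Om$. First I would recall from \lemref{hhen} (whose length estimate applies here since $c_1 \leqs g \leqs c_2$ is assumed directly) that $g$ is pinched between the stated positive constants, and then reduce everything to an interior estimate: on $\partial\Om$ the bound $|Dg|\leqs c_3$ is assumed, so by the maximum principle for the relevant elliptic operator it is enough to show that the auxiliary function $\varphi := (\log W)\cdot \eta(g)$, for a suitable positive decreasing function $\eta$ to be specified, cannot attain an interior maximum that is too large.

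Next I would carry out the differential-inequality computation: assume $\varphi$ has an interior maximum at $x_0 \in \Om$ where $W(x_0)$ is large, introduce the linearized minimal operator $L = a_{ij}(g,Dg)D_{ij}$ coming from \eqref{eh2me} (with coefficients as in the $n=2$ case of the excerpt), and compute $L\varphi$ at $x_0$ using $D\varphi(x_0)=0$. The equation $\HMc_\ep(g)=0$ is differentiated to express $L(g_{x})$ and $L(g_{t})$, hence $L(W)$, in terms of second derivatives of $g$ and lower-order terms; here the zeroth-order term $g(1+g_x^2)$ of \eqref{eh2me} and its derivatives are what couple $W$ to $g$ itself, and this is precisely where the hypothesis enters. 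At the maximum, $0 \geqs L\varphi = \eta L(\log W) + (\log W)\,\eta'(g)\,L(g) + 2\eta'(g)\,a_{ij}D_i(\log W)D_j g + (\log W)\eta''(g)\,a_{ij}D_i g\,D_j g$; using $D(\log W) = -(\log W)\eta'(g)/\eta(g)\,Dg$ at $x_0$ one turns this into an inequality for $\log W$ alone. The standard absorption argument — estimating $L(\log W) \geqs -C_1 - C_2(\log W)$ type terms against the good negative term coming from $\eta''$ or from the geometry — then forces $W(x_0)$, and hence $W$ on all of $\overl\Om$, to be bounded by a constant depending only on $c_1,c_2,c_3$.

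The main obstacle — and the reason for the unusual hypothesis $c_2 < c_1 + c_1\sqrt{\pi/2}$ — is controlling the sign and size of the terms produced by differentiating the zeroth-order part $g(1+g_x^2)$ of the horizontal minimal equation, which, unlike the Euclidean or vertical minimal equations, does not admit translation invariance along the $y$-axis and so contributes a term that is \emph{not} automatically of the right sign. Concretely, when one differentiates $\HMc_\ep(g)=0$ and forms $L(W)$, a term roughly proportional to $g^2(1+g_x^2)\cdot(\text{something of size }W)$ appears with a sign that must be dominated; the choice of the multiplier $\eta(g)$ has to be tuned so that the resulting quadratic-in-$\log W$ inequality has the correct concavity, and the interval in which $c_2/c_1$ is allowed to lie is exactly the range for which such an $\eta$ exists. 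So the heart of the proof is: (i) pick $\eta(g) = $ an explicit expression involving, presumably, $\arcsin$ or $\arctan$ of a rescaled $g$ (this is where $\sqrt{\pi/2}$ surfaces, as the total variation of an arcsine-type primitive over the admissible range of $g/c_1$), (ii) verify the resulting differential inequality closes, and (iii) conclude by the maximum principle plus the boundary bound $c_3$. I would expect step (ii), the verification that the bad zeroth-order terms are absorbed under the stated constraint on $c_1,c_2$, to be the only genuinely delicate computation; everything else is the routine Simon/Bakelman-type machinery specialized to two variables.
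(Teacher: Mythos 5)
Your plan has the right flavor (a maximum-principle argument with an auxiliary quantity whose admissible range explains the constant $\sqrt{\pi/2}$), but it has a genuine gap at exactly the point you defer to ``routine Simon/Bakelman-type machinery.'' The opening of Section~\ref{sge} is explicit that equation \eqref{eh2me} does \emph{not} satisfy the structure conditions of Simon's two-variable gradient estimates \cite{Si}, nor those in \cite{GT}, nor is the method of \cite{CNS} applicable; so there is no off-the-shelf absorption lemma producing an inequality of the form $L(\log W)\geqs -C_1-C_2\log W$ with constants depending only on $c_1,c_2$ for this equation. In your sketch the multiplier $\eta(g)$ is never specified (the guess ``arcsin or arctan'' is not what works), the verification that the bad terms coming from the zeroth-order part $g(1+g_x^2)$ are absorbed is exactly the step you leave out, and your stated ``decisive simplification in two variables'' (that $W$ controls $|Dg|$ when $g\geqs c_1>0$) is true in every dimension and is not the reason the two-variable restriction is needed. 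As written, the proposal is a plan rather than a proof.

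For comparison, the paper follows Bakel'man \cite{Ba}: one changes the \emph{dependent} variable, writing $g=\phi(u)$ with $\phi(u)=c_1+\int_0^u e^{-\gamma s^2}\,\rmd s$ and $\gamma$ chosen so that $c_1>\tfrac{1}{\sqrt{2\gamma}}>\sqrt{2/\pi}\,(c_2-c_1)$; the hypothesis $c_2<c_1+c_1\sqrt{\pi/2}$ is precisely what makes such a $\gamma$ exist while the total variation $\bigl(\tfrac{\pi}{4\gamma}\bigr)^{1/2}$ of $\phi$ still covers $[c_1,c_2]$ (so your heuristic about the origin of $\sqrt{\pi/2}$ is right in spirit, but the relevant primitive is Gaussian, not an arcsine, and it enters as a substitution, not as a multiplier of $\log W$). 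At an interior maximum $p$ of $w=\tfrac{1}{2}(u_x^2+u_t^2)$ the conditions $w_x(p)=w_t(p)=0$ give $u_x^2u_{xx}=u_t^2u_{tt}$, and \emph{this}, combined with the transformed equation \eqref{au3}, lets one solve for the second derivatives at $p$ in terms of first derivatives (equations \eqref{au7}--\eqref{au8}); this elimination is the crucial step that is only available in two variables and replaces the missing structure conditions. Differentiating the equation, multiplying by $u_x$ and $u_t$, adding, and using the negativity of $D^2w(p)$ then yields a quadratic inequality in $w(p)$ whose leading coefficient $-2\gamma\min\{c_1^2,1\}+1$ is negative, which bounds $w(p)$ by constants depending only on $c_1,c_2,c_3$. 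Your sketch also omits the case $c_1>1$, which the paper handles by the hyperbolic-translation rescaling of \remref{rm1}~(3) (turning $\ep$ into $\ep\la^2$) to reduce to $\la c_1<1$. Until an explicit auxiliary function is fixed and the inequality is actually shown to close for this specific equation, the estimate is not established.
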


\vskip2mm

\begin{proof}

 Let $g$ be a solution of the $\ep$-horizontal minimal equation on $\Om$, $C^ 1$ up to the boundary, such that $0<c_1\leqs g\leqs c_2$ on
 $\overl{\Om}.$

 Assume first that $c_1\leqs 1$.

 By assumption, we have $\dd c_2-c_1 <c_1\sqrt{\frac{\pi}{2}}.$
 We first recall the  elementary identity $\dd \int\limits_{0}^ {+\infty} e^ {-\ga s^
2} \rmd s=\left(\frac{\pi}{2\cdot 2\ga}\right)^{1/2}.$ We now
consider the function $\phi(u)$ given by
\begin{equation}\label{au1}
\hfill \phi(u)= c_1 + \int\limits_{0}^ u e^ {-\ga  s^ 2} \rmd
s\hfill
\end{equation}
where $\ga$ is chosen   such that $ c_1> \frac{1}{\sqrt{2\ga}}>
\sqrt{ \frac{2}{\pi}}\cdot(c_2 -c_1).$ It follows that $c_2 < c_1
+ \left(\frac{\pi}{2\cdot2\ga}\right)^{1/2}.$

\vskip2mm
 Let $c_{12}=c_{12}(c_1, c_2)$ be a  constant such that $\dd c_1 +
\int\limits_{0}^ {c_{12}} e^ {-\ga s^ 2} \rmd s > c_2.$

With this choice, we are able to write
\begin{multline}\label{aux}
\hfill g(p)=\phi(u(p)),\, u\geqs 0, \, p\in \overl{\Om}\hfill
\end{multline}
 for some auxiliary function $u$ satisfying $u(p)<c_{12}$ on $\overl{\Om}.$
Then, we have the inequalities
\begin{multline}\label{sta}
\hfill e^{ -\ga c_{12}^2} <\phi'(u) \leqs  1\quad\text{ on}\quad \overl{\Om}\hfill\\
\hphantom{00000}\quad\text{ and}\hfill\\
\hfill 0< -\phi''(u)\leqs +2\ga c_{12}\hfill\\
\hfill\frac{-\phi'''(u)\phi'(u) +\phi''^ 2(u)}{\phi'^ 2(u)}=2\ga\hfill\\
\quad\text{ on}\quad \Om.\hfill\\
\end{multline}

\noi

 Next differentiate \eqref{aux} with respect to $x$ and $t$ to
 obtain

\begin{multline}\label{au2}
 \hfill g_x = \phi'(u) u_x,\, g_t =\phi'(u) u_t\hfill\\
 \hfill  g_x^2 +g_t^2 =\phi'^ 2(u) (u_x^2
+u_t^2)\hfill
\end{multline}

Of course, with the aid of equation \eqref{au2}, we infer the
following inequalities:

\begin{multline}\label{egu}
\sqrt{u_x^2 +u_t^2}\leqs \frac{\max\limits_{\partial
\Om} \sqrt{g_x^2 +g_t^2}}{\min\limits_{\overl{\Om}}\phi'(u)},\quad \text{on}\quad \partial \Om \hfill\\
\text{and}\hfill\\
\qquad\sqrt{g_x^2 +g_t^2}\leqs \sqrt{u_x^2 +u_t^2},\quad \text{on}\quad  \overl{\Om}. \hfill\\
\end{multline}

 Thus, if the maximum of $D u$ on $\overl{\Om}$  is
achieved at the boundary then $\max\limits_{\overl{\Om}} | Du|
\leqs  \frac{\max\limits_{\partial\Om} |
Dg|}{\min\limits_{\overl{\Om}} \phi'(u)}\leqs c_3 e^{\ga c_{12}
+\ga c_{12}^2}$ and we are done. Otherwise, we assume that the
maximum of $D u$ is attained at an interior point $p$ of $ \Om$.
Set $ w(q):= \frac{u_x^2(q) +u_t^2(q)}{2}, \, q\in \Om.$ Thus,  it
suffices to infer the desired a priori estimates for the quantity
$w(p)= \frac{u_x^2(p) +u_t^2(p)}{2}.$ Of course, we have

\begin{multline}\label{au0}
\hfill w_x(p)=w_t(p)=0\hfill\\
\hfill\quad\text{and}\quad u_x^2u_{xx} =u_t^2u_{tt} \quad
\text{at}\quad p. \hfill\\
 \end{multline}

We remark that for the deduction of the last equation we used that
we are working only with two independent variables $x, t.$ This
will allow us to write the second derivatives appearing in
\eqref{eh2me}, in term of the first derivatives (at $p$). This is
a crucial step to achieve  the desired global estimates making use
of the present method.

\vskip2mm
 Hereafter, we write $\phi, \phi' ,\phi''.$  Next plug the expressions \eqref{au2} and its derivatives with respect to $x$ and $t$,
 respectively,  in \eqref{eh2me}.  Then  rewrite the $\ep$-horizontal
minimal equation in the form:

\begin{multline}\label{au3}
 \hfill  (\phi^2 +2\phi'^2 w +\ep) u_{xx} +( 1 + 2\phi'^2 w) u_{tt} -\phi'^2(u_x w_x +u_t w_t)      \hfill\\
\hfill +\frac{\phi^2 \phi''}{\phi'}2w -\frac{\phi'' (\phi^2
-1)}{\phi'} u_t^2 +\frac{\phi}{\phi'} (1 + 2 w\phi'^ 2) -
\frac{\phi\phi'^2}{\phi'}u_t^2 +\frac{\phi''}{\phi'}\, \ep\,
u_x^2=0
\end{multline}

Now note that by invoking elliptic regularity $g\in C^ 3(\Om)$. In
fact, $g$ is analytic in $\Om$  by Morrey's regularity theorem
\cite{Mo2}. Set $a_{11}:= \phi^2 + \phi'^2 u_t^2 +\ep, a_{22}= 1
+\phi'^2u_x^2$ and $ a_{12}=a_{21} =-\phi'^2u_x u_t.$ The
following identity will be useful and its verification is easy to
check.

\begin{multline}\label{au4}
(\phi^2 + 2\phi'^2 w+\ep) (u_x u_{xxx} +u_t u_{txx}) + (1 +
2\phi'^2 w)
(u_x u_{xtt} + u_t u_{ttt})\hfill\\
\hfill\hphantom{7777777777}-\phi'^2(u_x^2 w_{xx} + u_t^2 w_{tt} +
2 u_x u_t w_{xt})\hfill \\
\hfill = a_{11} w_{xx} + a_{22} w_{tt}+2a_{12} w_{xt}-(\phi^2 +\ep
+ 2\phi'^2 w)(u_{xx}^2 + u_{xt}^2)-(1 + 2\phi'^ 2 w) (u_{tt}^2 +
u_{xt}^2)
\end{multline}

The conditions  $a_{ij}$ is positive and $D^2 w (p)$ is negative
read

\begin{multline}\label{au4'}
(\phi^2 + 2\phi'^2 w +\ep) (u_x u_{xxx} +u_t u_{txx}) + (1 +
2\phi'^2 w)
(u_x u_{xtt} + u_t u_{ttt})\hfill\\
\hfill\hphantom{7777777777}-\phi'^2(u_x^2 w_{xx} + u_t^2 w_{tt} +
2 u_x u_t w_{xt})\hfill \\
\hfill \quad \leqs 0 \quad \text{at} \quad p.\hfill
\end{multline}

 Now  differentiate the P.D.E \eqref{au3} with respect to $x$
(respectively  differentiate with respect to $t$) and  multiply
the resulting equation by $u_x$ (respectively  multiply by $u_t$).
Adding the two equations thus obtained and using \eqref{au0} and
 \eqref{au4'}, we infer

\begin{multline}\label{au5}
 8\phi'\phi'' w^2(u_{xx} + u_{tt}) + 4 \phi\phi'w\, u_{xx}-2\phi'^2u_t^2 w + 6\phi'' \phi u_x^2 w +2w\left(\frac{\phi'^2-\phi''\phi}{\phi'^2}\right)\hfill\\
\hfill +4 w^2\left[
-\phi^2\left(\frac{-\phi'''\phi'+\phi''^2}{\phi'^2}\right)
+\phi'^2\right] +2u_t^ 2 w \left[ (\phi^2
-1)\left(\frac{-\phi'''\phi'+\phi''^2}{\phi'^2}\right)
\right] \hfill\\
\qquad -\left(\frac{-\phi'''\phi'+\phi''^2}{\phi'^2}\right)\,\ep\,
2 w u_x^2 \geqs 0 \quad \text{at} \quad p.\hfill
\end{multline}

Hence,  according to the foregoing we find

\begin{multline}\label{au6}
 (8\phi'\phi'' w^2)(u_{xx} + u_{tt}) + (4\phi\phi'w)\, u_{xx}\hfill\\
\hfill +4 w^2\left[ -\min\{\phi^2(u),
1\}\left(\frac{-\phi'''\phi'+\phi''^2}{\phi'^2}\right)
+\phi'^2\right]\hfill\\
\hfill\hphantom{000} + (6\phi'' \phi u_x^2 w) +
2w\left(\frac{\phi'^2-\phi''\phi}{\phi'^2}\right)\geqs 0 \quad \text{at} \quad p.\hfill \\
\end{multline}
since $\dd \frac{-\phi'''\phi'+\phi''^2}{\phi'^2}=2\ga>0,$
noticing that the last term of inequality \eqref{au5} is negative
.

We must now estimate the quantities $u_{xx}(p)$ and $(u_{xx} +
u_{tt})(p)$ in terms of $\phi, \phi', \phi''$ and $w$ at $p.$
Owing to \eqref{au0}  performing some computations we find that
\eqref{au3} becomes

\begin{multline}\label{au7}
u_{xx}\, \phi'\left[ (\phi^2+\ep +2\phi'^2 w)u_t^2 +(1
+2\phi'^2 w)u_x^ 2\right]\hfill\\
=-u_t^ 2 \phi''\phi^2 u_x^2 -\phi'' u_t^4 -\phi\, u_t^ 2-\phi'^ 2
\phi\, u_t^2 u_x^2 -\phi''\,\ep\, u_x^2u_t^2 \quad \text{at} \quad
p.\hfill
\end{multline}

\noi and
\begin{multline}\label{au8}
(u_{xx} + u_{tt})\, \phi'\left[ (\phi^2 +\ep +2\phi'^2 w)u_t^ 2
+(1
+2\phi'^2 w)u_x^ 2\right]\hfill\\
 \hfill=- 2\phi'' \phi^2 w u_x^2 -\phi'' u_t^2 2w -2 \phi\, w
 -2\phi\phi'^2 wu_x^ 2-2\phi''\ep \,w u_x^2
  \quad \text{at} \quad p.\hfill
\end{multline}

We now perform some calculations to discover

\begin{multline}\label{au9}
\hfill(8\phi'\phi'' w^2)(p)(u_{xx} + u_{tt})(p)\leqs
-\frac{4\phi\phi''w}{\phi'^ 2}(p) -(4\phi\phi''u_x^ 2w)(p)\hfill\\
\quad \text{and}\hfill\\
(4\phi\phi'w)(p)\, u_{xx}(p)\leqs -\frac{4\phi^ 3\phi''w}{\phi'^ 2}(p)-\frac{8\phi\phi'' w}{\phi'^ 2}(p)\\
 \quad \text{since}\quad  \phi' >0 \quad \text{and}\quad \phi''< 0.\hfill
\end{multline}

Inserting \eqref{au9} into \eqref{au6} we then  obtain

\begin{multline}\label{au10}
\hfill 4 w^2(p)\left[ -\min\{\phi^2(u(p)),
1\}\left(\frac{-\phi'''\phi'+\phi''^2}{\phi'^2}\right)(p)
+\phi'^2(p)\right]\hfill\\
\hfill\hphantom{000} +
2w(p)\left[\left(\frac{\phi'^2-\phi''\phi}{\phi'^2}\right)(p)-\frac{6\phi\phi''
}{\phi'^ 2}(p)-\frac{2\phi^ 3\phi''}{\phi'^ 2}(p)\right]
\geqs 0\hfill \\
\end{multline}

 Employing \eqref{sta} into \eqref{au10} we at last infer

\begin{multline}\label{ult}
 4w^ 2(p)\left[ -2\ga\min \{c_1^2,1\} +1\right] \hfill\\
\hfill \hfill\hphantom{00000000} +2 w(p)\left[ 1 +\Bigl( (\ga
+2\ga c_{12})c_2 +(2 \ga +4\ga c_{12})(3c_2 +c_2^ 3)\Bigr )e^
{2\ga c_{12} +2\ga c_{12}^ 2}\right]\geqs 0\hfill
\end{multline}

since $c_1>0.$

But then,

\begin{multline}\label{ult2}
 w^ 2(p)\left[ -2\ga c_1^2 +1\right] \hfill\\
\hfill \hfill\hphantom{00000000} +2 w(p)\left[ 1 +\Bigl( (\ga
+2\ga c_{12})c_2 +(2 \ga +4\ga c_{12})(3c_2 +c_2^ 3)\Bigr )e^
{2\ga c_{12} +2\ga c_{12}^ 2}\right]\geqs 0\hfill
\end{multline}

since we assume that $c_1\leqs 1.$

Finally, we recall that we have chosen $\ga$,  so that $2\ga
>\frac{1}{c_1^ 2}$.  Henceforth, in the light of \eqref{ult2}  we derive the a
priori bounds for $ |D u(p)|$, if $c_1\leqs 1.$

Now assume that $c_1>1$. Choose $\la<1$ such that $\la c_1<1.$
Write  by $\overl{y}=g(\overl{x}, t)$, the solution of
\eqref{eh2me} in $\Om$, taking coordinates $(\overl{x}, \overl{y},
t)\in \hip^2\times\R.$

  By
employing the translations $(\overl{x}, \overl{y}, t)\goto (\la
\overl{x}, \la \overl{y}, t):=(x, y, t),$ we obtain another
function $y=g_\la(x, t),\, (x, t)\in T_\la(\Om)$  (\remref{rm1}
(3)), satisfying the equation \eqref{eh2me} in $ T_\la(\Om) $ for
$\ep\la^ 2$ in the place of $\ep$. In fact, noticing that $
g_\la(x, t)=\la g(\overl{x}, t)=\la g(\frac{x}{\la}, t).$ It
suffices to compute the relations between the first and second
derivatives of $g(\overl{x},t)$ and $g_\la(x, t).$

 We conclude therefore that
$g_\la$ satisfies

\begin{equation}\label{ehnmel} \hfill  (g_\la)_{xx} ((g_\la)^2 +
(g_\la)_t^2 + \ep\la^ 2) + (g_\la)_{tt} (1 +(g_\la)_x^2)
-2(g_\la)_x(g_\la)_t(g_\la)_{xt} +(g_\la)(1 +(g_\la)_x^2)=0 \hfill
\end{equation}

Now the same relations between the first derivatives of
$g(\overl{x},t)$ and $g_\la(x, t),$ ensures   that $ \la^2(
g_{\overl{x}}^2 (\frac{x}{\la}, t) + g_t^2(\frac{x}{\la}, t))\leqs
(g_\la)_x^2(x, t) +(g_\la)_t^2(x, t)$ and $(g_\la)_x^2(x, t)
+(g_\la)_t^2(x, t)\leqs g_{\overl{x}}^2 (\frac{x}{\la}, t) +
g_t^2(\frac{x}{\la}, t).$

Thus

$\max\limits_{\partial T_\la(\Om)} |D g_\la|\leqs
\max\limits_{\partial\Om} |D g|$ and $\max\limits_{\overl{\Om}} |D
g| \leqs\frac{\max\limits_{\overl{ T_\la(\Om)}} |D g_\la|}{\la}.$

In view of $\la c_1\leqs g_\la \leqs \la c_2, $ in $T_\la (\Om),$
we can commence  again (since $\la c_1<1$), to mimic each step of
the  the above procedure to obtain the  desired a priori bounds,
if $c_1> 1.$

  Henceforth, the
a priori global gradient estimates for $|D g|$ is achieved, as
desired. This completes the proof of the theorem.

\end{proof}

In the case of a convex domain, we state the following global
$C^1$ estimates.  First, if $n=2$, equation \eqref{fme} becomes
(recall that $0\leqs \ell(s)\leqs 1, s\in [0,1], \, \ell(1)=1,
\ell(s)=0,\, s\in [0,1/2]$).

\begin{equation}\label{2fme}
g_{xx} (g^2 +g_t^2 +\ep) + g_{tt} (1 +g_x^2) -2g_xg_tg_{xt}
+\ell(s)g(1 +g_x^2)=0,\hfill
\end{equation}

\begin{theorem}[{\bf{\em Uniform global gradient estimates  II}}]\label{ggc}
${}$

Let $\Om\subset\partial_\infty \Hip^2\times\R$ be a $C^ {2, \a}$
bounded convex domain   for some \\$0<\a<1$. Let $\Ga=\partial
\Om$ and let $f\in C^ {2, \a}(\Ga)$ be a positive function. Let
$\ep\in [0,1]$. Given $s\in [1/2, 1]$, let $g_s\in C^{2,
\a}(\overl{\Om})$ be a  positive solution of equation \eqref{2fme}
in $\Om,$ such that $g_s=  (2s -1)f + 2(1-s)\min\limits_\Ga f$ on
$\Ga.$ Assume further that $\dd R(\Om,f) \leqs\min\limits_\Ga f
+\min\limits_\Ga f\sqrt{\frac{\pi}{2}}$ {\em (where $R(\Om,f)$ is
given by \lemref{hhen})}.

Then, the following estimate holds.

\begin{multline}\label{gge*}
\hfill \max\limits_{\overl{\Om}} |D g_s|\leqs C\hfill\\
\hfill \text{where}\quad C=C(\min\limits_{\Ga} f, \max
\limits_{\Ga} f,\max\limits_{ \Om}x-\min\limits_{\Om} x, K).\hfill
\end{multline}

 where $K$ is given by \lemref{emine}.

\end{theorem}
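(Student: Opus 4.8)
The plan is to reduce the convex‐domain statement to the already‐proved interior/global machinery. First I would observe that the hypotheses of the theorem supply exactly the two inputs that Theorem~\ref{gge} requires for each fixed $s\in[1/2,1]$: the uniform $C^0$ bounds and the uniform boundary gradient bound. Indeed, by part~1 of the proof of Theorem~\ref{ge2} (which applies verbatim to equation~\eqref{2fme}, since $0\le\ell(s)\le1$ and the Euclidean $2$-planes and the vertical geodesic planes remain sub- and supersolutions), one has $\min_\Ga f\le g_s<R(\Om,f)$ on $\overl\Om$; so set $c_1:=\min_\Ga f$ and $c_2:=R(\Om,f)$, which are independent of $s$ and of $\ep$. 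Next, by Theorem~\ref{ge2} together with Lemma~\ref{emine}, the boundary gradient is bounded by a constant $C=C(\min_\Ga f,\max_\Ga f,r(\Om),K)$, again independent of $s$ and $\ep$; call this constant $c_3$. The standing assumption $R(\Om,f)\le\min_\Ga f+\min_\Ga f\sqrt{\pi/2}$ is precisely the inequality $c_2<c_1+c_1\sqrt{\pi/2}$ needed in Theorem~\ref{gge}.

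Second, I would note that Theorem~\ref{gge} as stated concerns the $\ep$-horizontal minimal equation~\eqref{eh2me}, whereas here we face the family~\eqref{2fme} with the factor $\ell(s)$ multiplying the zeroth-order term $g(1+g_x^2)$. The fix is to rerun the proof of Theorem~\ref{gge} carrying $\ell(s)$ along: after substituting $g=\phi(u)$ and differentiating, the only change is that each occurrence of the lower-order terms $\phi/\phi'$ and $\phi\phi'^2$ in~\eqref{au3}, and correspondingly in~\eqref{au7}–\eqref{au9}, gets multiplied by $\ell(s)\in[0,1]$. Since in the crucial estimates~\eqref{au9} those terms are bounded \emph{above} by quantities of a fixed sign (using $\phi'>0$, $\phi''<0$), multiplying them by a number in $[0,1]$ only makes the right-hand sides smaller, so the inequalities~\eqref{au10}, \eqref{ult}, \eqref{ult2} persist with the same constants. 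Hence the final quadratic inequality for $w(p)=\tfrac12|Du(p)|^2$ yields a bound depending only on $c_1,c_2,c_3$, uniformly in $s$ and $\ep$. Translating back via~\eqref{egu} gives $\max_{\overl\Om}|Dg_s|\le C$ with $C=C(\min_\Ga f,\max_\Ga f,\ R(\Om,f),\ r(\Om),\ K)$; since $R(\Om,f)$ and $r(\Om)$ are themselves controlled by $\max_\Ga f$ and $\max_\Om x-\min_\Om x$ via Definition~\ref{qua} (with $h(\Ga)=\max_\Om x-\min_\Om x$ when $n=2$), this is exactly the asserted dependence.

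The one genuine subtlety — and the step I expect to be the main obstacle — is the case $c_1=\min_\Ga f>1$, where the substitution $g=\phi(u)$ with $\phi(u)=c_1+\int_0^u e^{-\ga s^2}\,ds$ cannot be calibrated so that $2\ga>1/c_1^2$ together with the range condition. As in the proof of Theorem~\ref{gge}, one conjugates by a hyperbolic homothety $T_\la$ with $\la<1$ chosen so that $\la c_1<1$: by Remark~\ref{rm1}(3) the function $g_\la(x,t)=\la\, g_s(x/\la,t)$ solves the analogous equation with parameter $\ep\la^2$ and with $\ell(s)$ unchanged, over the domain $T_\la(\Om)$, and the boundary/interior gradient comparisons $\max_{\partial T_\la(\Om)}|Dg_\la|\le\max_{\partial\Om}|Dg_s|$, $\max_{\overl\Om}|Dg_s|\le\la^{-1}\max_{\overline{T_\la(\Om)}}|Dg_\la|$ hold verbatim. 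One must check that $T_\la(\Om)$ is still $C^{2,\a}$ convex (clear, as $T_\la$ is an affine map) and that $R(T_\la\Om,\cdot)$, $r(T_\la\Om)$ scale controllably in $\la$; since $\la$ depends only on $c_1$, all resulting constants still depend only on the stated data. Running the $c_1\le1$ argument for $g_\la$ and scaling back finishes the proof.
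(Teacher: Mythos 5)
Your proposal is correct and takes essentially the same route as the paper: invoke Theorem~\ref{ge2} (together with Lemma~\ref{emine}) for the uniform $C^0$ and boundary gradient bounds, then rerun the proof of Theorem~\ref{gge} for equation \eqref{2fme}, observing that $0\leqs\ell(s)\leqs 1$ leaves the key inequalities \eqref{ult}--\eqref{ult2} intact, with $c_1=\min\limits_\Ga f$ and the scaling trick for $c_1>1$. The only small adjustment is that, since the hypothesis is the non-strict inequality $R(\Om,f)\leqs \min\limits_\Ga f\,(1+\sqrt{\pi/2})$, you should take $c_2:=\max\limits_{\overl{\Om}}g_s<R(\Om,f)$ (as the paper does), rather than $c_2=R(\Om,f)$, to obtain the strict inequality $c_2<c_1+c_1\sqrt{\pi/2}$ required in Theorem~\ref{gge}.
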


\begin{proof}
We summarize the proof as follows.
 In view of \thmref{ge2}, it is
routine to check that the proof of the global gradient
(\thmref{gge}) is   valid for equation \eqref{2fme}  taking
$c_2<R(\Om,f)$ and $c_1=\min\limits_\Ga f,$  noticing that $0\leqs
\ell(s)\leqs 1$. Indeed, we obtain the same  a priori bounds  as
given by equations  \eqref{ult} and \eqref{ult2}.

\end{proof}

\section{ An existence  result}

\vskip4mm

 Our  main goal now is to prove the existence of a solution of a Dirichlet problem for
the  horizontal minimal equation. However, the non strictly ellipticity of the equation imposes a new insight.

 Because of that, we need to consider the family of $\ep$-horizontal minimal equation \eqref{eh2me}
($0\leqs\ep\leqs 1$) in order to solve  the Dirichlet Problem for $\ep>0$; then, by a compactness argument, we solve the original horizontal minimal equation.

The techniques use suitable barriers and the maximum principle, combined  with our  a priori uniform $C^1$ estimates. Then, we are able to apply
  the Leray-Schaulder degree  theory \cite{A} and method  \cite{GT}, \cite{GD}, to accomplish the proof.

  Consider the following Dirichlet problem in two independent variables.

\begin{multline}\label{dir}
\hfill \HMc_\ep (g) = 0 \quad\text{in}\quad \Omega\hfill\\
\hfill\quad\qquad\hphantom{0000000000000000000} g=f \quad  \text{on}\quad \Ga, \quad g\in C^{2,\, \alpha}(\bar{\Omega})\hfill\\
\end{multline}

Let $R(\Om,f)$ be the quantity defined in \defref{qua}. We have the following existence  result.

\begin{theorem}\label{exi}

Let $\Om\subset\partial_\infty \Hip^2\times\R$ be a $C^ {2, \a}$
bounded convex domain   for some $0<\a<1$. Let $\Ga=\partial \Om$
and let $f\in C^{2, \a}(\Ga)$ be a positive function.

Assume
further that $\dd R(\Om,f) \leqs\min\limits_\Ga f +\min\limits_\Ga
f\sqrt{\frac{\pi}{2}}.$

Then, for any $\ep\in [0,1]$, the Dirichlet problem \eqref{dir}
admits a  positive solution $g$.

Particularly, $f$ admits an  extension $g\in C^{2,\,\alpha}(\overl
{\Omega})$ satisfying the horizontal minimal  equation
$\eqref{2min}$ on $\Omega$.
 Furthermore, this solution is obtained as the limit in the $C^2$-topology of a sequence $g_{\ep_n}, 0<\ep_n<1$ satisfying \eqref{dir}, as $\ep_n\goto 0$.

\end{theorem}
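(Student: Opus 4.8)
The plan is to use the Leray--Schauder degree/continuity method in the form suited to quasilinear elliptic Dirichlet problems (as in \cite[Chapter~11, Theorem~13.8]{GT}), first for fixed $\ep>0$ where equation \eqref{eh2me} is strictly elliptic, and then to pass to the limit $\ep\goto 0$. First I would fix $\ep\in(0,1]$ and introduce the family of problems \eqref{2fme} with parameter $s\in[1/2,1]$: the boundary data $(2s-1)f+2(1-s)\min_\Ga f$ interpolates between the constant $\min_\Ga f$ at $s=1/2$ and $f$ at $s=1$, while simultaneously the zeroth-order term is switched on via $\ell(s)$. At $s=1/2$ the equation is $g_{xx}(g^2+g_t^2+\ep)+g_{tt}(1+g_x^2)-2g_xg_tg_{xt}=0$ with constant boundary data, for which the constant function $g\equiv\min_\Ga f$ is an obvious solution, giving a nonzero degree to start the continuation. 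At $s=1$ we recover \eqref{dir}.

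Next I would assemble the a priori estimates, which are exactly what the earlier sections were built to supply. The $C^0$ bound $\min_\Ga f\leqs g_s<R(\Om,f)$ comes from \lemref{hhen} (valid for \eqref{2fme} as noted in the proof of \thmref{ge2}). The $C^1$ boundary gradient bound, uniform in $s$ and $\ep$, is \thmref{ge2}. The interior/global $C^1$ bound, again uniform in $s$ and in $\ep\in[0,1]$, is \thmref{ggc}; this is precisely where the structural hypothesis $R(\Om,f)\leqs\min_\Ga f+\min_\Ga f\sqrt{\pi/2}$ is consumed, since it furnishes the inequality $c_2<c_1+c_1\sqrt{\pi/2}$ needed to choose the function $\phi$ in \thmref{gge}. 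With $C^1$ control in hand, the quasilinear equation becomes uniformly elliptic with H\"older coefficients, so the interior and global Schauder-type estimates of Ladyzhenskaya--Ural'tseva \cite{L-U}, \cite[Theorems~13.7, 6.6]{GT} upgrade this to a uniform $C^{2,\a}(\overl\Om)$ bound. I would emphasize that all these constants depend only on $\min_\Ga f$, $\max_\Ga f$, the width of $\Om$, and the constant $K$ of \lemref{emine}, and crucially \emph{not} on $\ep$. The Leray--Schauder theorem then yields, for each fixed $\ep>0$, a positive solution $g_\ep\in C^{2,\a}(\overl\Om)$ of \eqref{dir}.

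Finally I would let $\ep=\ep_n\goto 0$. The sequence $g_{\ep_n}$ is bounded in $C^{2,\a}(\overl\Om)$ uniformly in $n$, so by Arzel\`a--Ascoli a subsequence converges in $C^2(\overl\Om)$ to some $g\in C^2(\overl\Om)$ with $g\geqs\min_\Ga f>0$ on $\overl\Om$ and $g=f$ on $\Ga$. Passing to the limit in \eqref{eh2me} (the coefficient $g^2+g_t^2+\ep$ converges to $g^2+g_t^2$), the limit $g$ satisfies $\HMc_0(g)=0$, i.e.\ the horizontal minimal equation \eqref{2min}; and since $g>0$ the equation is strictly elliptic along $g$ (inequality \eqref{hesn}/\eqref{ell} with $\min\{1,g^2\}>0$), so elliptic regularity gives $g\in C^{2,\a}(\overl\Om)$ — in fact analytic in the interior by Morrey's theorem \cite{Mo2}. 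This produces the desired extension of $f$ and exhibits it as a $C^2$-limit of the $g_{\ep_n}$, completing the proof.

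The main obstacle I anticipate is not any single step but the bookkeeping needed to verify that the hypotheses of the Leray--Schauder machinery are genuinely met: one must check that the zeroth-order term does not destroy the sign conditions or the applicability of the maximum principle along the homotopy (here the positivity lower bound $g_s\geqs\min_\Ga f$, preserved because the horocylinders stay subsolutions when $\ell(s)\geqs0$, is what saves this), and that the $C^1$ estimate of \thmref{ggc} is truly uniform over both parameters $s$ and $\ep$ simultaneously — the delicate point being that the constant $\ga$ and $c_{12}$ in \thmref{gge} are chosen from $c_1,c_2$ alone, hence survive the limit $\ep\goto0$.
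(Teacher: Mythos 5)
Your proposal reproduces the paper's architecture almost exactly: continuation in the parameter $s$ of the family \eqref{2fme} for fixed $\ep>0$; the $C^0$ bound from \lemref{hhen}; the boundary gradient bound of \thmref{ge2}; the global gradient bound of \thmref{ggc}, which is indeed where the hypothesis $R(\Om,f)\leqs \min\limits_\Ga f+\min\limits_\Ga f\sqrt{\pi/2}$ is consumed; the Ladyzhenskaya--Ural'tseva H\"older gradient estimates and Schauder theory to reach uniform $C^{2,\a}(\overl{\Om})$ control independent of $\ep$; the Leray--Schauder theorem for $\ep>0$; and Arzel\`a--Ascoli as $\ep_n\goto 0$, with strict ellipticity of the limit equation recovered from $g\geqs\min\limits_\Ga f>0$. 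All of this matches the paper's proof.

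The one genuine gap is how you anchor the degree argument. You start the homotopy at $s=1/2$, note that $g\equiv\min\limits_\Ga f$ solves that problem, and assert this gives ``a nonzero degree to start the continuation.'' Mere existence of a solution at the initial parameter does not yield a nonzero degree: you would need uniqueness (or nondegeneracy) of that solution, and uniqueness is not obvious here, since the linearization of \eqref{2fme} at a solution carries a zeroth-order term of the form $2g\,g_{xx}\,w$, of uncontrolled sign, coming from the dependence of the coefficient $g^2+g_t^2+\ep$ on $g$ itself. The paper sidesteps this entirely: it extends the homotopy down to $s=0$ by setting $\ell(s)=0$ and boundary data $h(s,\cdot)=2s\min\limits_\Ga f$ on $[0,1/2]$, and defines a compact map $T(\cdot,s)$ on $C^{1,\beta}(\overl{\Om})$ by solving the associated \emph{linear} strictly elliptic problem (with $\max\{g,0\}$ in the zeroth-order term), so that $T(\cdot,0)\equiv 0$ and the standard Leray--Schauder fixed point theorem applies directly, the uniform a priori $C^{1,\beta}$ bound being required for all fixed points with $s\in[0,1]$; fixed points for $s\leqs 1/2$ are the constants $2s\min\limits_\Ga f$, and positivity for $s>1/2$ is recovered by comparison with the planes $y=\cst$. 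If you adopt this device (or otherwise justify the nonzero degree at $s=1/2$), your argument coincides with the paper's.
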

\vskip2mm
We observe that, as in \thmref{gge}, the last assumption in \thmref{exi} is invariant by hyperbolic translations

\vskip2mm

\begin{proof}

 Suppose first $0<\ep\leqs 1.$ We first intend to show the existence of a  solution of the Dirichlet  problem \eqref{dir}, for $0<\ep\leqs 1.$

 Let $\ell(s)=2s -1,\, s\in [1/2, 1]$ and $\ell(s)=0,\, s\in
[0,1/2].$ Let $h(s,p)=(2s -1)f(p) + 2(1-s)\min\limits_\Ga f,\,
s\in [1/2, 1]$ and $h(s,p)=2s \min\limits_\Ga f,\, s\in [0, 1/2].$

 Given $s\in [0,1]$ and given $g\in
C^{1,\,\beta}(\overl{\Om}),$ consider the following family of
linear Dirichlet problems:
\begin{align*}
 & u_{xx} (g^2 +g_t^2+\ep) + u_{tt} (1 +g_x^2) -2g_xg_tu_{xt} +
\ell(s) \max\{g, 0\}(1 +g_x^2)=0\\
&\quad\text{in}\quad \Omega,\\
 &u(p)= h(s,p),\quad p\in \Ga.
\end{align*}
 Define an operator $T\,:C^{1,\,\beta}(\overl{\Om}) \times [0, 1] \rightarrow
C^{2,\,\alpha\beta}(\bar{\Omega})$, $u = T(g,\,s)$ by the unique
solution of the above problem for given $(g,\,s)$. Existence and
uniqueness of $u$ are ensured by the  extension lemma \cite[Lemma 6.38]{GT} and by the classical theorem for linear
strictly elliptic operator \cite[Theorem 6.14]{GT}. Note that $T(g, 0)=0$ and
$T(g, s)= 2s\min\limits_\Ga f,$ if $s\in [0, 1/2]$.

To ensure the existence of a solution of our Dirichlet problem in the space $C^{2,\,
\alpha}(\bar{\Omega})$  it is suffices to check that  $T(\cdot,1)$ has a fixed point.
The equation  $g= T(g, s)$ reads

\begin{multline}\label{esq}
  g_{xx} (g^2 +g_t^2+\ep) + g_{tt} (1 +g_x^2) -2g_xg_tg_{xt} +
\ell(s)\max\{g, 0\}(1 +g_x^2)=0\hfill\\
\qquad \mbox{in}\quad \Omega,\hfill \\
 \qquad g  = h(s,\cdot) \quad \mbox{on}\quad \Ga \hfill.
\end{multline}

Then,  we can apply maximum principle, comparing with Euclidean
planes $y=\cst$, to ensure that a solution  of the equation
\eqref{esq} is nonnegative, so  it satisfies

\begin{equation}\label{esq2}
\begin{split}
&  g_{xx} (g^2 +g_t^2+\ep) + g_{tt} (1 +g_x^2) -2g_xg_tg_{xt} +
\ell(s)g(1 +g_x^2)=0\\
&\mbox{in}\quad \Omega, \, \\
& g  = h(s,\cdot) \quad \mbox{on}\quad \Ga \;.
\end{split}
\end{equation}
with $g\geqs 0$ on $\Om.$

Noticing that by applying the maximum principle we have
 if $s\not=0,$ $g>0$ on $\Om$ and if $s=0,$ $g\equiv 0,$ {\em i.e.} $g$ satisfies equation
\eqref{2fme}, for $0<\ep \leqs 1.$ Of course, the definition of $\ell (s)$ ensures that  the solution $g$ for $0\leqs s\leqs 1/2,$ is constant equal to $h(s,p)=2s \min\limits_\Ga f.$   Observe that the solution $g$ for $s\geqs 1/2,$ satisfies the inequality $g\geqs\min\limits_\Ga f$ on $\Om.$
Now in virtue of our a priori global $C^1$ estimates (\thmref{ggc}) and the
global H\"older estimates of Ladyzhenskaya and Ural'tseva \cite{GT}, we have  a priori
global Hölder  estimates for the first derivatives. That
is, there exists a constant $C$ such that $[D
g]_{\Om,\beta}\leqs C,$ for all $g$ satisfying \eqref{esq2}. Hence, by employing
 the  Leray-Schauder theorem  \cite{A}, \cite{GT}, we obtain the desired existence of a positive solution $g_\ep$ of the Dirichlet problem
 \eqref{dir}, if $0<\ep\leqs 1.$ Recall that the uniform a priori horizontal length estimates, given by \lemref {hhen},  forces the  uniform lower bound $g_\ep> \min\limits_\Ga f $ on $\Om$, independently of $\ep$.

 Now let $\ep_n$ be a sequence such that $\ep_n\goto 0$, if $n\goto
 \infty$ ($0<\ep_n\leqs 1$) and let $g_{\ep_n}$ be a positive  $C^{2,\,
\alpha}(\overl{\Omega})$ solution of \eqref{dir}.  Our a priori global $C^1$ estimates  combined with the a priori Schauder global estimates, allow us to apply  the Arzelà-Ascoli's theorem to obtain  a subsequence
$\{g_{\ep_{n_j}}\}$ that converges to a $C^2(\overl{\Omega})$
nonnegative function $g$ satisfying \eqref{eh2me}. Clearly, $g\geqs  \min\limits_\Ga f $  on $\Om.$ Henceforth we have
 a solution of the Dirichlet problem \eqref{dir}, for $\ep=0.$
This accomplishes the proof of the theorem.
\end{proof}

\vskip2mm
\begin{remark}\label{cat} {\em
The geometry of the ambient space   $\hip^2 \times \R$ has some very intriguing geometric phenomenon: Any catenoid (minimal surface of revolution) has vertical height less than $\pi$ and the supremum of the family is $\pi$. Eric Toubiana and the author, using the family of catenoids as suitable barriers, proved an asymptotic principle \cite[Theorem 2.1]{SE-T} that have many consequences. In particular, it follows that there is
no horizontal minimal graph given by a function $g\in C^2(\Om)\cap
C^0(\overl{\Om})$ on a bounded strictly convex domain $\Om$,
taking zero  boundary data on $\partial \Om.$ We believe that the fact that the
 strict convexity of the Jordan domain  forbids the
existence of the Dirichlet problem for \eqref{2min}, {\em with
zero boundary data}, is a very surprising phenomenon.

 Some other results in the minimal surfaces theory, make use of the behavior of the catenoid family  \cite{NSWT}, \cite{H-N-SaE-T}.}

\end{remark}

On the other hand, the following existence result is somehow a counterpart of the above remark and is a immediate consequence of \thmref{exi}.

\newpage
\begin{corollary}\label{coex}
Let $\Om\subset\partial_\infty \Hip^2\times\R$ be a $C^ {2, \a}$
bounded convex domain   for some $0<\a<1$. Let $\Ga=\partial \Om$
and let $f\in C^{2, \a}(\Ga)$ be a positive function.

Let $c_0$ be a constant satisfying the inequality
 $\dd c_0\geqs \osc\limits_\Ga (f) +\frac{h(\Ga)}{2}\cdot$  Then, for any $\ep\in [0,1]$, there exists a solution of the  $\ep$-horizontal minimal equation taking the boundary data $f +c_0$ on $\Ga.$

\end{corollary}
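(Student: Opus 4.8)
The plan is to obtain \coref{coex} as an essentially immediate application of \thmref{exi} to the shifted boundary data $f+c_0$ in place of $f$. Since $c_0\geqs\osc\limits_\Ga(f)+\tfrac{h(\Ga)}{2}\geqs 0$ and $f>0$, the function $f+c_0$ is a positive element of $C^{2,\a}(\Ga)$, and $\Om$ is unchanged, hence still a $C^{2,\a}$ bounded convex domain; thus the only hypothesis of \thmref{exi} that requires any work is the quantitative bound $R(\Om,f+c_0)\leqs\min\limits_\Ga(f+c_0)\bigl(1+\sqrt{\tfrac{\pi}{2}}\bigr)$. Granting this, \thmref{exi} produces, for every $\ep\in[0,1]$, a positive solution $g\in C^{2,\a}(\overl{\Om})$ of the $\ep$-horizontal minimal equation with $g=f+c_0$ on $\Ga$, which is exactly the assertion.

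To verify the bound I would use \defref{qua} in the case $n=2$, observing that the horizontal width $h(\Ga)$ depends only on $\Om$ and is therefore the same for $f$ and for $f+c_0$, while $\max\limits_\Ga(f+c_0)=\max\limits_\Ga f+c_0$ and $\min\limits_\Ga(f+c_0)=\min\limits_\Ga f+c_0$. Then
\[
R(\Om,f+c_0)=\sqrt{\Bigl(\max\limits_\Ga f+c_0\Bigr)^{2}+\Bigl(\tfrac{h(\Ga)}{2}\Bigr)^{2}}\ \leqs\ \max\limits_\Ga f+c_0+\tfrac{h(\Ga)}{2},
\]
using the elementary inequality $\sqrt{a^{2}+b^{2}}\leqs a+b$ for $a,b\geqs 0$. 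Writing $\max\limits_\Ga f=\min\limits_\Ga f+\osc\limits_\Ga(f)$ and invoking the hypothesis $\osc\limits_\Ga(f)+\tfrac{h(\Ga)}{2}\leqs c_0$, the right-hand side is at most $\min\limits_\Ga f+c_0+c_0=\min\limits_\Ga(f+c_0)+c_0$. Finally, since $\min\limits_\Ga f>0$ and $\sqrt{\pi/2}>1$ one has $c_0<\min\limits_\Ga(f+c_0)\sqrt{\pi/2}$, so altogether $R(\Om,f+c_0)<\min\limits_\Ga(f+c_0)+\min\limits_\Ga(f+c_0)\sqrt{\pi/2}$, as required.

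I do not anticipate a genuine obstacle here: the corollary is a bookkeeping consequence of \thmref{exi}, and its entire content is the elementary chain above, which shows that raising the Dirichlet data by a constant $c_0$ as large as $\osc\limits_\Ga(f)+\tfrac{h(\Ga)}{2}$ pushes the ratio $R(\Om,f+c_0)/\min\limits_\Ga(f+c_0)$ below the admissibility threshold $1+\sqrt{\pi/2}$. The one point worth keeping in mind while writing it up is that it is precisely the translation invariance of $h(\Ga)$---that is, the fact that the width term in $R$ is unaffected by the vertical shift---together with the positivity of $\min\limits_\Ga f$ that makes the estimate go through; this is also why the required lower bound on $c_0$ coincides with the one announced in the introduction.
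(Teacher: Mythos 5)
Your proposal is correct and follows exactly the route the paper intends: the corollary is stated as an immediate consequence of \thmref{exi}, and your verification that $R(\Om,f+c_0)\leqs \max\limits_\Ga f+c_0+\tfrac{h(\Ga)}{2}\leqs \min\limits_\Ga(f+c_0)+c_0<\min\limits_\Ga(f+c_0)\bigl(1+\sqrt{\pi/2}\bigr)$ is precisely the bookkeeping needed to apply that theorem to the shifted data $f+c_0$. No gaps.
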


\end{document}